\numberwithin{equation}{section}
\newtheorem{lemma}{Lemma}[section]
\newtheorem{theorem}[lemma]{Theorem}
\newtheorem{corollary}[lemma]{Corollary}
\newtheorem{prop}[lemma]{Proposition}
\newtheorem*{theorem*}{Theorem}
\theoremstyle{definition}
\newtheorem{definition}{Definition}[section]
\newtheorem{remark}{Remark}[section]
      \newcommand{\R}{{\mathbb R}}
      \newcommand{\C}{{\mathbb C}}
\newcommand{\iny}{\hookrightarrow} 
\newcommand{\Tk}{S_1^{k,\mathrm{sa}}}
\newcommand{\uno}{1\!\!1}
\title[On cb and $(1,cb)$-summing maps with applications to quantum XOR games]{On the relation between completely bounded and $(1,cb)$-summing maps with applications to quantum XOR games}
\author{Marius Junge}
\address{Department of Mathematics\\University of Illinois at Urbana-Champaign\\1409 W. Green St. Urbana, IL 61891. USA}
\email{junge@math.uiuc.edu}
\author{Aleksander M. Kubicki}
\address{Departamento de An\'alisis Matem\'atico y Matem\'atica Aplicada \\
Facultad de Ciencias Matem\'aticas \\ Universidad Complutense de Madrid \\
Madrid 28040. Spain}
\email{akubicki@ucm.es}
\author{Carlos Palazuelos}
\address{Instituto de Ciencias Matem\'aticas (ICMAT)\\Departamento de An\'alisis Matem\'atico y Matem\'atica Aplicada \\
Facultad de Ciencias Matem\'aticas \\ Universidad Complutense de Madrid \\
Madrid 28040. Spain}
\email{cpalazue@mat.ucm.es}
\author{Ignacio Villanueva} 
\address{Instituto de Ciencias Matem\'aticas (ICMAT)\\Departamento de An\'alisis Matem\'atico y Matem\'atica Aplicada \\
Facultad de Ciencias Matem\'aticas \\ Universidad Complutense de Madrid \\
Madrid 28040. Spain}
\email{ignaciov@mat.ucm.es}
\thanks{MJ is partially supported by the NSF grants DMS 1800872 and Raise-TAG183917. A. M. K is partially supported by the  European Research Council (ERC) (grant agreement No 648913) and Spanish MICINN project MTM2014-57838-C2-2-P. C.P. is partially supported by the MICINN project PID2020-113523GB-I00 and by QUITEMAD+-CM, P2018/TCS4342, funded by Comunidad de Madrid. I. V. is partially supported by the MICINN project PID2020-116398GB-I00. C.P. and I. V. are  partially supported by Grant CEX2019-000904-S funded by MCINN/AEI/ 10.13039/501100011033.}
\begin{document}

\keywords{Operator spaces, completely bounded maps, cb-summing maps,  quantum XOR games}

\maketitle

\begin{abstract}
	In this work we show that, given a linear map from a general operator space into the dual of a C$^*$-algebra, its completely bounded norm is upper bounded by a universal constant times its $(1,cb)$-summing norm. This problem is motivated by the study of quantum XOR games in the field of quantum information theory. In particular, our results imply that for such games entangled strategies cannot be arbitrarily better than those strategies using one-way classical communication.
\end{abstract}

\section{Introduction and main results}

During the last years there have been many interactions between quantum information and the fields of operator algebras and operator spaces/systems. To mention a few examples, free probability has been successfully applied in the study of quantum channel capacities \cite{CoNe10, CoNe11}, operator systems and operator algebras techniques have been recently used to study synchronous games \cite{HMPS19, LMPRSSTW11, PSSTW16}  and operators spaces have been key to solve several problems on nonlocal games and Bell inequalities \cite{PaVi16}. In fact, these connections also go in the converse direction, as it is shown by the new proofs of Grothendieck's Theorem for operator spaces based on the use of the Embezzlement state \cite{ReVi14}, the proof of new embeddings between noncommutative $L_p$-spaces \cite{JuPa16}  and certain operator algebras \cite{HGJ19} by using some classical protocols in quantum information and,  probably the most notable example, the recent resolution of the famous Connes Embedding Problem by using techniques from quantum computer sciences \cite{JNVWY}.

The main goal of this paper is to study the relation between certain norms defined on linear maps from a general operator space $X$ to the dual of a C$^*$-algebra $A^*$. This problem has a clear mathematical motivation, since some fundamental results such as the noncommutative versions of Grothendieck's Theorem, can be read in similar terms. However,  in the spirit of the previous paragraph, in the second part of the paper we explain that this problem, when restricted to the case where both $X$ and $A$ are matrix algebras, is equivalent to the study of certain values of the so-called quantum XOR games, hence stressing the close connection between pure mathematical problems and some questions motivated by quantum information theory.

In order to state our main results we need to introduce some elements. Let us recall that an operator space $X$ is a closed subspace of $B(H)$ \cite{EffrosRuanBook, Pisierbook}. For any such subspace the operator norm on $B(H)$ automatically induces a sequence of matrix norms $\|\cdot\|_d$ on $M_d(X)$, $d\geq 1$, via the inclusions  $M_d(X) \subseteq M_d(B(H))\simeq B(H^{\oplus d})$. In this way, given a linear map $T:X\rightarrow Y$  between two operator spaces $X$ and $Y$, we say that $T$ is completely bounded if $$\|T\|_{cb}:=\sup_d\|\uno\otimes T:M_d(X)\rightarrow M_d(Y)\|<\infty.$$

The study of operator spaces was initiated in \cite{Ruan88} and can be understood as a noncommutative version of Banach space theory. Since then, an important line of research has been devoted to developing the  ``Grothendieck's program''  for operator spaces (see for instance \cite{EfRu91, HaMu08, Junge05, PiSh02, Xu06}). A crucial definition in the local theory of Banach spaces is that of absolutely $p$-summing maps, as those linear maps between Banach spaces $T:X\rightarrow Y$ such that $$\pi_p(T):=\|\uno\otimes T:\ell_p\otimes_{\epsilon}X\rightarrow \ell_p(Y)\|<\infty.$$ Motivated by the great relevance of these maps \cite{DiJaTo95}, in \cite{Pisier98} Pisier introduced and studied a noncommutative analogue in the context of operator spaces. Given a linear map between operator spaces $T:X\rightarrow Y$, we say that $T$ is completely $p$-summing if  $$\pi_p^0(T):=\|\uno\otimes T:S_p\otimes_{min}X\rightarrow S_p(Y)\|<\infty.$$ Note that the previous definition requires of the highly nontrivial concept of noncommutative vector-valued $L_p$-spaces, which was also developed in \cite{Pisier98}.

However, the noncommutative context admits some other generalization of $p$-summing maps. Here, we will deal with the $(p,cb)$-summing maps, introduced by the first author in \cite{JungeHab} (see also \cite{ParJu16}) and which can be understood as an intermediate definition between the one for Banach spaces and the one for operator spaces  above. Given an operator space $X$ and a Banach space $Y$, a linear map $T:X\rightarrow Y$ is said to be $(p,cb)$-summing if 
\begin{align*}
\pi_{(p,\, cb)}(T):=\|\uno\otimes T:\ell_p\otimes_{min}X\rightarrow \ell_p(Y)\|<\infty.
\end{align*}

It is clear from the previous definitions that for every linear map $T:X\rightarrow Y$ between operator spaces, the inequality  $\max\{\|T\|_{cb},$ $ \pi_{(p,\, cb)}(T)\}\leq \pi_p^0(T)$ holds.  However, there is no general relation between the quantities $\|T\|_{cb}$ and $\pi_{(p,\, cb)}(T)$. That is, one can find examples of linear maps and operator spaces for which $\|T\|_{cb}<\infty$ and $\pi_{(p,\, cb)}(T)=\infty$ and also for which $\|T\|_{cb}=\infty$ and $\pi_{(p,\, cb)}(T)<\infty$. 

In this work we study the relation between $\|T\|_{cb}$ and $\pi_{(1,\, cb)}(T)$ for maps $T$ defined from a general operator space $X$ to the dual of a C$^*$-algebra $A^*$. Our main result is as follows.
\begin{theorem}\label{TMain}
	There exists a universal constant $K$ such that for any linear map $T:X\rightarrow A^*$, where $X$ is an operator space and $A$ is a C$^*$-algebra, we have $$\|T\|_{cb}\leq K\pi_{1,cb}(T).$$
\end{theorem}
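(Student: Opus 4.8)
The plan is to reduce the general statement to a statement about finite-dimensional matrix algebras and then exploit the bilinear/trilinear structure available there. First I would fix the standard identification $A^* = \mathcal{CB}(A,\mathbb{C})^*$-type duality and recall that a map $T:X\to A^*$ is completely bounded with norm $\le c$ precisely when the associated bilinear form $u:X\times A\to\mathbb{C}$, $u(x,a)=\langle T(x),a\rangle$, is jointly completely bounded on $X\times A$ (in the sense of Effros--Ruan / Blecher--Paulsen) with the same constant; and similarly that $\pi_{1,cb}(T)\le c$ admits a reformulation as a factorization/Pietsch-type statement through $L_1$ together with a completely bounded piece. So the theorem becomes: a jointly cb bilinear form on $X\times A$ is controlled by its $(1,cb)$-summing norm. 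Since both norms are determined by their restrictions to finite-dimensional subspaces $X_0\subseteq X$ and finite-dimensional C$^*$-subalgebras $A_0\subseteq A$, by a routine ultraproduct/local-reflexivity argument it suffices to treat $X$ finite-dimensional and $A=\gM_k$ a matrix algebra, so $A^*=S_1^k$. This is exactly the regime identified in the introduction as equivalent to quantum XOR games, and I expect the bulk of the paper to be devoted to this finite-dimensional inequality.

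In the matrix-algebra case, the natural tool is a Pietsch-type domination theorem for $(1,cb)$-summing maps: $\pi_{1,cb}(T)\le 1$ should be equivalent to the existence of states $\varphi$ on $M_n(X)^*$-side data (i.e. a ``cb version'' of a probability measure on the unit ball, realized as states on the relevant operator system) such that $\|T(x)\|\le \bigl(\int \|x\|_{\text{something}}\,d\varphi\bigr)$ uniformly in the matrix amplification. Concretely, I would try to show that $\pi_{1,cb}(T)\le 1$ yields a factorization $T = T_2\circ T_1$ where $T_1:X\to Y$ is completely bounded with $\|T_1\|_{cb}\le 1$ into some operator space $Y$ built from the domination states, and $T_2:Y\to A^*$ has a special ``diagonal'' or ``multiplication-like'' form coming from the $\ell_1$-structure, and then bound $\|T_2\|_{cb}$ by a universal constant using noncommutative Grothendieck-type inequalities (Haagerup--Musat, Pisier--Shlyakhtenko) applied to the target $A^*$. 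The point is that maps \emph{into} the predual of a von Neumann algebra are precisely where the noncommutative little Grothendieck theorem gives unconditional control: any bounded map factors through a Hilbert space / $L_2$-column-row structure, and the $(1,cb)$ hypothesis upgrades ``bounded'' to ``completely bounded up to a constant'' on the relevant amplifications.

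The main obstacle, I expect, is precisely producing the right Pietsch-style domination in the operator-space category and then controlling the ``diagonal'' factor $T_2$: unlike the scalar case, the $(1,cb)$-summing norm involves the $\min$ tensor product $\ell_1\otimes_{\min}X$, whose unit ball is not weak$^*$-compact in a way that immediately yields a single dominating state, so one has to pass through matrix amplifications and keep track of completely bounded behaviour under the domination — essentially proving a cb-valued Pietsch factorization. Once that is in hand, the estimate $\|T_2\|_{cb}\le K$ should follow from the structure theory of $S_1^k$ and a noncommutative Khintchine/Grothendieck argument, but getting the amplified domination data to interact correctly with the operator-space matrix norms (rather than just the Banach-space norms) is where the universal constant $K$ is really earned; I anticipate this is done by combining the cb little Grothendieck theorem for maps into $A^*$ with a careful analysis of how $\uno\otimes T$ acts on $\ell_1\otimes_{\min}X$ versus $M_d(X)$.
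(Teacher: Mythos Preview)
Your high-level outline --- Pietsch-type factorization for $(1,cb)$-summing maps, followed by a Grothendieck-type argument exploiting that the target is $A^*$ --- is the right shape, and is indeed what the paper does. But you have misidentified where the difficulty lies, and the step you flag as ``the main obstacle'' is in fact already available off the shelf, while the step you wave through is where all the work happens.

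Concretely: the cb-Pietsch factorization you are worried about constructing is exactly Pisier's result (Remark~5.11 in his $L_p$-memoir, recalled in the paper as Remark~\ref{Rem-factorization (p,cb)}). It gives, for $\pi_{1,cb}(T)\le 1$ with $X\subset B(H)$, a factorization $T = u\circ \mathcal{M}|_{j(X)}\circ j$ where $j$ is a complete isometry into an ultraproduct of $B(H)$'s, $\mathcal{M}$ is an ultraproduct of multiplication maps $M_{a_i,b_i}(x)=a_i x b_i$ with $a_i,b_i\in B_{S_2(H)}$, and $u$ is a \emph{contraction} --- but only a Banach-space contraction, not completely bounded. So the factorization does not hand you a cb map into $A^*$; the gap between $\|u\|$ and $\|u\|_{cb}$ is the whole problem, and ``noncommutative Grothendieck applied to the target $A^*$'' does not by itself close it.

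What the paper actually proves is that each $M_{a,b}:B(H)\to S_1(H)$ factors completely boundedly through $R\cap C$ with a universal bound, i.e.\ $\Gamma_{R\cap C}(M_{a,b})\le 4\sqrt{2}$ (Proposition~\ref{prop. Mab}). This is the technical heart, and it is not a direct application of little Grothendieck: it goes through Pisier's duality theory for $\gamma$-norms associated to weights (Theorem~\ref{Thm_gammas}), together with a hand-built weight on $(X\otimes\overline X)_+$ equivalent to $\|\cdot\|_{(R+C)\otimes_{\min}X}^2$ (Lemma~\ref{Lemma_weight_R+C}), and then uses little Grothendieck twice --- once as Corollary~\ref{cor 2} to control $(2,RC)$-summing maps, once as Lemma~\ref{cor 3} to control $(2,R+C)$-summing maps from $A^*$. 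Only after this does the dual form of little Grothendieck (Corollary~\ref{key cor}) let you upgrade the merely bounded $u\circ\tilde\beta:\tilde H\to A^*$ to a cb map $R_{\tilde H}\cap C_{\tilde H}\to A^*$. Your proposal does not isolate the $R\cap C$ factorization of the multiplication map as the key object, nor the weight/$\gamma$-norm duality needed to obtain it; without that intermediate Hilbertian operator space, there is no mechanism to promote the bounded factor $u$ to a completely bounded one, and the argument stalls.
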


In order to prove Theorem \ref{TMain} we will need to study the quantity $\Gamma_{R\cap C}$ (the factorizable ``norm'' through the operator space $R\cap C$) and prove that it fits very well in our context. Once this is done, Theorem \ref{TMain} will follow as an application of the noncommutative Little Grothendieck's Theorem. In fact, we will prove a stronger result than the one stated above, namely $\Gamma_{R\cap C}(T)\leq K\pi_{1,cb}(T)$. It is worth mentioning that one cannot expect to have a converse inequality in Theorem \ref{TMain}; not even in the commutative case. That is, there exist maps $T:\ell_\infty\rightarrow \ell_\infty^*$ for which $\|T\|_{cb}<\infty$ and $\pi_{(1,\, cb)}(T)=\infty$ (see Section \ref{Sec:QXOR} for details). Let us also mention that we do not know if the constant $K$ in Theorem \ref{TMain} can be taken equal to one.  However, we stress that the techniques used in the present work lead irremediably to $K > 1$.

Theorem  \ref{TMain} can be read in the context of quantum XOR games \cite{ReVi15} when $X=A=M_n$ is the C$^*$-algebra of $n\times n$ complex matrices. Quantum XOR games are collaborative games where a referee asks some (quantum) questions to a couple of players, usually called Alice and Bob, who must answer with outputs $a,\, b \in \{\pm1\}$. According to the questions and the parity of the answers, $ab$, the players win or loose the game. It turns out that these games can be identified with selfadjoint matrices $G\in M_{nm}$ such that $\|G\|_{S_1^{nm}}\leq 1$, where $S_1^{nm}$ denotes the corresponding $1$-Schatten class (see Section \ref{Sec:QXOR} for details). Moreover, the largest possible winning probability of the game depending on the type of strategies performed by the players can be expressed by means of norms on $\hat{G}:M_n\rightarrow M_m$, where $\hat{G}$ is the linear map associated to the matrix $G\in M_{nm}$ according to the algebraic identification $M_{nm}=L(M_n\rightarrow M_m)$. In this context, if we denote by $\beta^*(G)$ the largest bias\footnote{For some reasons that will become clear in Section \ref{Sec:QXOR}, when working with XOR games one usually works with the bias $\beta=2P_{win}- 1$ rather than with the winning probability $P_{win}$.} of the game $G$ when the players are allowed to perform entangled strategies, it is known that $\beta^*(G)=\|\hat{G}:M_n\rightarrow S_1^m\|_{cb}$ (see Section \ref{Sec:QXOR}). On the other hand, if we denote by $\beta_{owc}(G)$ the largest bias of the game when the players are allowed to send one-way classical communication as part of their strategies, we will show in Section  \ref{Sec:QXOR} that $\beta_{owc}(G)\approx\pi_{1,cb}(\hat{G}:M_n\rightarrow S_1^m)$, where  $\approx$ means equivalence up to a universal constant. Hence, Theorem \ref{TMain} above leads to the following consequence.
\begin{corollary}\label{Cormain}
	Let $G$ be a quantum XOR games. Then, $$\beta^*(G)\leq K'\beta_{owc}(G)$$for a certain universal constant $K'$.
\end{corollary}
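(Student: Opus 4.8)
The plan is to read the two biases of $G$ as operator space norms of the associated map $\hat{G}\colon M_n\to M_m$ and then quote Theorem~\ref{TMain}. As recalled above, the entangled bias is exactly the completely bounded norm into the trace class, $\beta^*(G)=\|\hat{G}\colon M_n\to S_1^m\|_{cb}$, where $S_1^m=(M_m)^*$ (this is the Regev--Vidick description~\cite{ReVi15}, which we revisit in Section~\ref{Sec:QXOR}). In Section~\ref{Sec:QXOR} we will also prove that the one-way classical communication bias is two-sided comparable to the $(1,cb)$-summing norm of the \emph{same} map: there are universal constants $c,C>0$ with
\begin{equation*}
c\,\pi_{1,cb}(\hat{G}\colon M_n\to S_1^m)\;\le\;\beta_{owc}(G)\;\le\;C\,\pi_{1,cb}(\hat{G}\colon M_n\to S_1^m).
\end{equation*}
Granting this dictionary, Corollary~\ref{Cormain} drops out of Theorem~\ref{TMain} applied with the operator space $X=M_n$ and the C$^*$-algebra $A=M_m$ (so that $A^*=S_1^m$):
\begin{equation*}
\beta^*(G)=\|\hat{G}\|_{cb}\;\le\;K\,\pi_{1,cb}(\hat{G})\;\le\;\frac{K}{c}\,\beta_{owc}(G),
\end{equation*}
and one may take $K'=K/c$. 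Note that only the lower bound on $\pi_{1,cb}(\hat{G})$ in terms of $\beta_{owc}(G)$ enters here.

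The identity $\beta^*(G)=\|\hat{G}\|_{cb}$ is the routine half: one writes the acceptance probability of a shared-entanglement strategy with local $\pm1$ observables as a bilinear form evaluated against $G$, reorganizes the matrix indices, and recognizes the supremum defining $\|\cdot\|_{cb}$ for a map valued in a $1$-Schatten class; this will be recalled in Section~\ref{Sec:QXOR}. The substantive point, and the step I expect to be the main obstacle, is the comparison $\beta_{owc}(G)\approx\pi_{1,cb}(\hat{G})$. One has to model a one-way classical communication protocol --- Alice measures her question register and forwards a classical label to Bob, Bob measures his register conditioned on that label, and the parity of the two $\pm1$ outcomes is scored against $G$ --- and show that optimizing its bias reproduces, up to an absolute factor, the norm $\|\uno\otimes\hat{G}\colon\ell_1\otimes_{min}M_n\to\ell_1(S_1^m)\|$ defining $\pi_{1,cb}(\hat{G})$. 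The delicate features are that the classical message carries an a priori unbounded alphabet, that Alice's local data has to be encoded on the $\ell_1\otimes_{min}M_n$ side (the ``$cb$'' leg) while Bob's conditional measurements produce the $\ell_1$-sum over messages on the output side $\ell_1(S_1^m)$, and that one must translate between arbitrary protocols and the extremal configurations that realize the norm while losing at most a universal constant. This last translation is precisely where a constant strictly bigger than one can appear, in accordance with the remark after Theorem~\ref{TMain} that the present techniques cannot reach $K=1$.

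In short, the only genuine work beyond Theorem~\ref{TMain} is done in Section~\ref{Sec:QXOR}: with the two norm identifications $\beta^*(G)=\|\hat{G}\|_{cb}$ and $\beta_{owc}(G)\approx\pi_{1,cb}(\hat{G})$ at hand, Corollary~\ref{Cormain} is exactly their combination with the main theorem, as displayed above.
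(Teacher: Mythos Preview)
Your proposal is correct and follows essentially the same route as the paper: identify $\beta^*(G)=\|\hat{G}\|_{cb}$, establish $\beta_{owc}(G)\approx\pi_{1,cb}(\hat{G})$ (the paper does this as Proposition~\ref{Prop XOR-norms} with explicit constants $1$ and $4$), and then apply Theorem~\ref{TMain} with $X=S_\infty^n$ and $A=S_\infty^m$. One small misattribution: the remark that the present techniques cannot reach $K=1$ refers to the proof of Theorem~\ref{TMain} itself (via the noncommutative little Grothendieck inequality), not to the constant lost in the $\beta_{owc}\approx\pi_{1,cb}$ translation, which is a separate source of looseness.
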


One of the main goals of quantum information theory is to find scenarios where quantum entanglement is ``much more powerful'' than classical resources. When working with classical XOR games \cite{Cleve04} (see also \cite{PaVi16}), it is very easy to see that the one-way communication of classical information is as powerful as possible. That is, those games can always be won with probability one if the players can use classical communication as part of their strategy. On the contrary, as a consequence of the classical Grothendieck theorem, entanglement is a quite limited resource to play classical XOR games, providing only small advantages over classical strategies. The situation changes dramatically for \textit{quantum} XOR games. Within this more general family of games there exist instances for which the use of entanglement allows to attain biases which are unboundedly larger than the best achievable bias by players sharing only classical randomness \cite{ReVi15}. On the other hand, since the questions now are quantum states, classical communication is not enough to win with certainty. In fact, there exist quantum XOR games for which sharing one-way classical communication does not provide any advantage at all (see Section  \ref{Sec:QXOR} for further clarification on the previous statements). This new phenomenology motivates us to ask whether there exist quantum XOR games for which quantum entanglement allows Alice and Bob to answer much more successfully than using one-way classical communication. Corollary \ref{Cormain} says that this is not the case. Hence, one needs to consider more involved tasks than winning quantum XOR games in order to find examples for which quantum entanglement is much better than sending classical information.

The structure of the paper is the following. In Section \ref{Sec:Preliminaries} we introduce some notation and basic results that will be used along the paper. Section \ref{Sec: Main result} will be devoted to proving Theorem \ref{TMain}. Finally, in Section \ref{Sec:QXOR} we will introduce quantum XOR games and we will explain how different values of these games can be written in tems of norms on linear maps between some operator spaces. As a consequence of this mathematical formulation for the different values of quantum XOR games we will see how Corollary \ref{Cormain} can be obtained from Theorem \ref{TMain}.

\section{Preliminaries and some basic results}\label{Sec:Preliminaries}

In this section we introduce some tools and well known results that we will use later. We will add the proof for some statements which cannot be found explicitly in the literature.  We assume the reader to be familiar with the basic elements of Banach spaces \cite{Tomczak} and operator spaces \cite{Pisierbook}.

\subsection{Absolutely $p$-summing maps and completely $p$-summing maps}\label{subsec p-summing}

Given a linear map $T:X\rightarrow Y$ between two Banach spaces and $1\leq p<\infty$, we say that $T$ is \textit{absolutely $p$-summing} if
\begin{align}\label{Equation p-summing}
\pi_p(T):=\|id\otimes T:\ell_p\otimes_\epsilon X\rightarrow \ell_p(X)\|<\infty,
\end{align}where here $\ell_p\otimes_\epsilon X$ denotes the (complete) injective tensor product and $\ell_p(X)$ is the corresponding vector valued $L_p$-space. It is not difficult to see that $\pi_p$ is a norm on the set of all absolutely $p$-summing maps. The \textit{factorization theorem} for these maps \cite[Theorem 2.13]{DiJaTo95} states that $T:X\rightarrow Y$  is absolutely $p$-summing if and only if there exist a regular Borel probability measure $\mu$ on the unit ball of the dual space of $X^*$, $B_{X^*}$, a closed subspace $E_p\subseteq L_p(C(B_{X^*}), \mu)$ and a linear map $u:E_p\rightarrow Y$ with $\|u\|=\pi_p(T)$ such that the following diagram commutes:

$$\xymatrix@R=0.6cm@C=1.5cm {{C(B_{X^*})}\ar[r]^{i} &
	{L_p(\mu)} \\{j(X)}\ar@{}[u]|-*[@]{\subset}\ar[r]^{i|_{j(X)}}
	& {E_p}\ar@{}[u]|-*[@]{\subset}\ar[d]^{u}\\{X}\ar[u]^{j}\ar[r]^{T} & {Y}}$$
Here, $j:X\iny C(B_{X^*})$ is the canonical embedding and $i:C(B_{X^*})\rightarrow L_p(C(B_{X^*}), \mu)$ is the identity map. 

The case $p=2$ deserves some attention in this work. First, note that in this case the space $E_2$ can be replaced by the whole space $L_2(C(B_{X^*}), \mu)$  (by complementation) and there is no need to consider the restriction of $i$ to $j(X)$ \cite[Corollary 2.16]{DiJaTo95}. Moreover, it is not difficult to see from this factorization theorem that $2$-summing operators have the \textit{extension property} \cite[Theorem 4.15]{DiJaTo95}: Given a $2$-summing operator $T:X\rightarrow Y$ and any isometry $j:X\iny \tilde{X}$, there exists an extension $\tilde{T}:\tilde{X}\rightarrow Y$ (so that $T=\tilde{T}\circ j$) verifying $\pi_2(T)=\pi_2(\tilde{T})$.

Motivated by the great relevance of absolutely $p$-summing maps in the local theory of Banach spaces, in \cite{Pisier98} Pisier developed the theory of completely $p$-summing maps in the context of operator spaces.  Given a linear map $T:X\rightarrow Y$ between two operator spaces and $1\leq p<\infty$, we say that $T$ is \textit{completely $p$-summing} if $$\pi_p^0(T):=\|id\otimes T:S_p\otimes_{min} X\rightarrow S_p(X)\|<\infty,$$where here $S_p\otimes_{min} X$ denotes the minimal tensor product in the category of operator spaces and $S_p(X)$ is the corresponding non-commutative vector valued $L_p$-space. 

It is interesting to note that completely $p$-summing maps verify a factorization theorem analogous to the one for absolutely $p$-summing maps. However, in order to explain that result we need to recall some definitions about ultraproducts of Banach spaces and operator spaces. We refer to \cite{Hein80} for a detailed exposition on ultraproducts of Banach spaces and to \cite[Section 2.8]{Pisierbook} for the operator space case. Given a family of Banach spaces $(X_i)_{i\in I}$ and a nontrivial ultrafilter $\mathcal U$ on the set $I$, denote by $\ell$ the set of elements $(x_i)_{i\in I}$ with $x_i\in X_i$ for every $i$ and such that $\sup_i\|x_i\|<\infty$. We equip this space with the norm $\|x\|=\sup_i\|x_i\|$. Let us now denote by $\nu_\mathcal U$ the subspace of $\ell$ given by the elements $x$ such that $\lim_{\mathcal U}\|x_i\|=0$. The quotient $\ell/\nu_\mathcal U$ is a Banach space called ultraproduct of the family $(X_i)_{i\in I}$ and denoted by $\prod X_i/\mathcal U$. Note that if $[x]$ is the equivalence class associated to an element $(x_i)_i$, then $\|[x]\|=\lim_\mathcal U\|x_i\|$. If in addition $X_i$ is endowed with an operator space structure for every $i$, we can endow the space $\prod X_i/\mathcal U$ with a natural operator space structure by defining $M_n(\prod X_i/\mathcal U)=\prod M_n(X_i)/\mathcal U$ for every $n\in \mathbb{N}$. It can be seen that given a family of completely bounded maps $T_i:X_i\rightarrow Y_i$ for every $i$, one can define a linear map $\hat{T}:\prod X_i/\mathcal U\rightarrow \prod Y_i/\mathcal U$ by $\hat{T}([(x_i)_i])=[(T(x_i))_i]$ which verifies $\|T\|_{cb}\leq \sup_i\|T_i\|_{cb}$. It is also interesting to mention that ultraproducts respect isometries and quotients both in the Banach space category and in the operator space category.

The \textit{factorization theorem} for completely $p$-summing maps \cite[Remark 5.7]{Pisier98} states that given a linear map $T:X\rightarrow Y$ between operator spaces, such that $X\subset B(H)$, there exist an ultrafilter $\mathcal U$ over an index set $I$,  families $(a_i)_i$, $(b_i)_i$ in the unit sphere of $S_{2p}(H)$, a closed (operator) space $E_p\subseteq \prod S_p/\mathcal U$ and a linear map $u:E_p\rightarrow Y$ with $\|u\|_{cb}=\pi^0_p (T)$ such that the following diagram commutes:
$$\xymatrix@R=0.6cm@C=1.5cm {{\prod B(H)/\mathcal U}\ar[r]^{\mathcal M} &
	{\prod S_p/\mathcal U} \\{j(X)}\ar@{}[u]|-*[@]{\subset}\ar[r]^{\mathcal M|_{j(X)}}
	& {E_p}\ar@{}[u]|-*[@]{\subset}\ar[d]^{u}\\{X}\ar[u]^{j}\ar[r]^{T} & {Y}}$$
Here, $j:X\iny \prod B(H)/\mathcal U$ is the complete isometry defined as $j(x)=[(x)_{i\in I}]$ and $\mathcal M:\prod B(H)/\mathcal U\rightarrow \prod S_p/\mathcal U$ is the linear map defined by the family $(M_i)_i$, where $M_i:B(H)\rightarrow S_p(H)$ is defined as $M_i(x)=a_ixb_i$ for every $i\in I$. In the previous picture,  $E_p=\overline{\mathcal M(j(X))}$.

As mentioned in the Introduction, one can define an intermediate notion between absolutely $p$-summing and completely $p$-summing maps. Indeed, given an operator spaces $X$ and a Banach space $Y$, a linear map $T:X\rightarrow Y$ is said to be \textit{$(p,cb)$-summing} (see \cite{JungeHab}, \cite{ParJu16}) if 
\begin{align}\label{Equation (p,cb)-summing}
\pi_{(p,\, cb)}(T):=\|\uno\otimes T:\ell_p\otimes_{min}X\rightarrow \ell_p(Y)\|<\infty.
\end{align}
\begin{remark}\label{Rem-factorization (p,cb)}
It was observed by Pisier \cite[Remark 5.11]{Pisier98}, that $T:X\rightarrow Y$ is $(p,cb)$-summing if and only if it verifies a similar factorization theorem to the one for completely $p$-summing maps but where, in this case, $\|u\|=\pi_{(p,\, cb)}(T)$.
\end{remark}

Finally, in order to study the previous type of maps in the context of our main Theorem \ref{Main Thm}, we will need two more definitions related with summing properties of linear maps. In order to introduce them, we first need to recall some additional notions in operator space theory. Given a complex Hilbert space $H$, the operator space structures defined by the  isometric identifications 
\begin{align}\label{Def R-C}
H\simeq B(H,\C)\hspace{0.5 cm}\text{   and   }\hspace{0.5 cm}H\simeq B(\C,H)
\end{align} are the row and column operator space structures on $H$, denoted by $R_H$ and $C_H$, respectively. When the underlying Hilbert space is $H= \ell_2$, we use the simpler notation $R$ and $C$. Moreover, we can also define the $R_H\cap C_H$ operator space structure on $H$ by means of the embedding 
\begin{align}\label{embedding intersec}
j:R_H\cap C_H\rightarrow R_H\oplus_\infty C_H,
\end{align}defined as $j(x)=(x,x)$. Finally, the $R_H+ C_H$ operator space structure on $H$ can be defined so that $R_H+C_H=(R_H\cap C_H)^*$ completely isometrically. The following stability properties under ultraproducts  will play a role later on:

\begin{remark}\label{remark_ultraproducts}
	It is well known that, at the Banach space level, the ultraproduct of a family of Hilbert spaces $(H_i)_i$, $\hat{H}=\prod H_i/\mathcal U$, is a Hilbert space. Furthermore, according to the definition of $R_H$ and $C_H$ and the comments above, it is not difficult to see that  $$\prod R_{H_i}/\mathcal U=R_{\hat{H}}\hspace{0.5 cm}\text{   and   }\hspace{0.5 cm}\prod C_{H_i}/\mathcal U=C_{\hat{H}}.$$ Moreover, the properties of the ultraproducts together with the definition of $R_H\cap C_H$ via the embedding (\ref{embedding intersec}) ensures that $$\prod (R_{H_i}\cap C_{H_i})/\mathcal U=R_{\hat{H}}\cap C_{\hat{H}}.$$
	In fact, this stability under ultraproducts is a property of any homogeneous Hilbertian operator space (as, e.g.,  $R_H,\, C_H$ or $R_H \cap C_H$). See \cite[Lemma 3.1 and remarks in page 82]{PisierOH}.
\end{remark}

With the previous definitions at hand, we can come back to summing properties of linear maps. Given a mapping $T:X\rightarrow Y$ between an operator space $X$ and a Banach space $Y$, we say that $T$ is \textit{$(2,RC)$-summing} if 
\begin{equation}\label{Def:pi_2,RcapC}
\pi_{2,RC}(T):=\|id\otimes T:(R\cap C)\otimes_{min} X\rightarrow \ell_2(Y)\|<\infty,
\end{equation}
and we say that the map is \textit{$(2,R+C)$-summing} if 
\begin{equation}\label{Def:pi_2,R+C}
\pi_{2,R+C}(T):=\|id\otimes T:(R+ C)\otimes_{min} X\rightarrow \ell_2(Y)\|<\infty.
\end{equation}

Compare the previous two definitions with the definition of $(2,cb)$-summing map given in Equation (\ref{Equation (p,cb)-summing}) for $p=2$, where the operator space considered in $\ell_2$ is the so called $OH$.

\subsection{Little Grothendieck's theorem}

Although most maps between Banach spaces are not $p$-summing for any $p$, a famous result, called \textit{little Grothendieck's theorem}, asserts that every linear map $T:C(K)\rightarrow L_2(\mu)$, where $K$ is a compact space and $\mu$ is any measure verifies that $\pi_2(T)\leq K_{LG}\|T\|$. Here, $K_{LG}=\sqrt{\pi/2}$ in the real case and $K_{LG}=2/\sqrt{\pi}$ in the complex case.

There is also a noncommutative version of this result, which was first proved in \cite{Pisier78} and later in \cite{Haagerup85} (with an improvement in the constant). This result is usually referred to as \textit{non-commutative little Grothendieck's theorem}\footnote{In order to see that this result generalizes the classical little Grothendieck's theorem one must use the \textit{domination theorem} for $2$-summing maps \cite[Theorem 2.13]{DiJaTo95}. We omit this result here because we will not use it.}.

\begin{theorem}\label{NClittleGrothendieck}
	Let $A$ be a C$^*$-algebra and $H$ be a Hilbert space. Then, for any bounded linear map $T:A\rightarrow H$ there exist states $f_1$ and $f_2$ on $A$ such that $$\|T(x)\|\leq \|T\|\Big(f_1(x^*x)+ f_2(xx^*)\Big)^\frac{1}{2}$$for every $x\in A$.
\end{theorem}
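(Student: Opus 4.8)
The statement to prove is the noncommutative little Grothendieck theorem, Theorem~\ref{NClittleGrothendieck}.

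\medskip

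The plan is to deduce this from the ``tracial'' or bilinear form of noncommutative Grothendieck-type inequalities, following the Pisier--Haagerup line of argument. First I would reduce to the case where $A$ is a von Neumann algebra with a faithful normal state (or unital C$^*$-algebra), since by standard bidual/weak-$*$-density arguments one may pass to $A^{**}$ and the states extend, and $T$ extends normally without increasing the norm on the relevant elements. Next, the key reduction: it suffices to prove that for all finite families $(x_j)_{j=1}^N$ in $A$,
\[
\Big\| \sum_j T(x_j)\otimes e_j \Big\|_{H\otimes_2 \ell_2^N} \;\le\; \|T\|\,\Big( \Big\| \sum_j x_j^* x_j \Big\|^{1/2} + \Big\| \sum_j x_j x_j^* \Big\|^{1/2}\Big),
\]
i.e.\ an inequality of ``$(2,R+C)$-summing with constant $\|T\|$'' type, but actually one wants the sharp statement in the form of a single pair of states. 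The passage from such a uniform inequality to the existence of states $f_1,f_2$ is the classical \emph{Pietsch-type separation argument}: consider, in the dual of $A\oplus A$ (or the space of pairs of functionals), the convex set of functions of the form $x\mapsto \|T\|^2\big(g_1(x^*x)+g_2(xx^*)\big) - \|T(x)\|^2$ as $g_1,g_2$ range over states, and show it does not meet the negative cone; a Hahn--Banach/minimax argument then produces the desired $f_1,f_2$ after normalizing. I would set this separation argument up carefully using the compactness of the state space in the weak-$*$ topology and convexity of the map $(g_1,g_2)\mapsto$ that function.

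\medskip

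For the core inequality itself, the approach I would take is to dualize: a bounded map $T:A\to H$ corresponds to a bounded bilinear form on $A\times H^*$ (or, identifying $H\simeq H^*$ conjugate-linearly, essentially to an element controlled by $\|T\|$), and one applies the noncommutative Grothendieck inequality for bilinear forms on $A\times B$ with $B$ a commutative C$^*$-algebra, or more directly one uses that Hilbert space is, via $R+C$ or via the factorization $H = R\cap C$ mapped suitably, amenable to the C$^*$-algebra Grothendieck theorem. Concretely, Haagerup's proof runs through the fact that $\|T\| = \|T\|_{cb}$ is not needed; instead one uses the ``absolute value'' trick: for a single $x$, $\|T(x)\|^2 = \langle T^*T x, x\rangle$ and one estimates $T^*T$ by positive functionals. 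The cleanest route is: the map $T$ factors (up to the constant) through a commutative C$^*$-subalgebra only after conditioning, so one really does need the noncommutative input. I would cite and use the bilinear noncommutative Grothendieck inequality: every bounded bilinear form $u$ on $A\times B$ satisfies $|u(a,b)|\le C\|u\|\big(f(aa^*)+f(a^*a)\big)^{1/2}\big(g(bb^*)+g(b^*b)\big)^{1/2}$ for states $f,g$; specializing $B=\mathbb C$ (or $B$ the scalars, which is commutative) collapses the $b$-side to $|b|$ and yields exactly the claimed inequality with $C=1$ in Haagerup's sharp version.

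\medskip

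The main obstacle is the core bilinear inequality: proving the noncommutative Grothendieck inequality from scratch is itself a substantial theorem (the content of \cite{Pisier78, Haagerup85}), involving either Pisier's approach via approximation and a clever averaging over the unitary group together with the commutative little Grothendieck theorem applied on maximal abelian subalgebras, or Haagerup's approach via a direct operator-theoretic argument with the polar decomposition and a subtle iteration to optimize the constant. Since the excerpt explicitly attributes this to \cite{Pisier78} and \cite{Haagerup85}, I would treat the bilinear/tracial inequality as the cited black box and spend the actual proof effort only on (i) the reduction to von Neumann algebras with faithful states, (ii) the discretization to finite families and the appearance of $\big\|\sum x_j^*x_j\big\|$ and $\big\|\sum x_jx_j^*\big\|$, and (iii) the Hahn--Banach separation producing the two states $f_1,f_2$ from the uniform estimate. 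If instead a self-contained proof is wanted, the hard step is the unitary-averaging argument showing that $\int_{\mathcal U(A)} u^*xu\,du$ lands in the center and is dominated, which is where all the real work and the loss (or not) in the constant occurs.
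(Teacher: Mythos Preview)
The paper does not prove Theorem~\ref{NClittleGrothendieck} at all: it is stated as a known result and attributed to \cite{Pisier78} and \cite{Haagerup85}, with no argument given. So there is no ``paper's own proof'' to compare against; the theorem functions here purely as a cited black box used in the proof of Corollary~\ref{key cor}.

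That said, one step in your sketch is genuinely wrong and worth flagging. You propose to deduce the little Grothendieck inequality from the bilinear noncommutative Grothendieck theorem by ``specializing $B=\mathbb C$''. But a bounded bilinear form on $A\times\mathbb C$ is just a bounded linear functional on $A$, not a linear map $T:A\to H$ into an infinite-dimensional Hilbert space; the Hilbert-space target has disappeared entirely in that specialization. The correct dualization is to view $T$ as a bilinear form on $A\times \overline{H}$, but $\overline{H}$ is not a C$^*$-algebra, so Haagerup's bilinear theorem does not apply directly. One can embed $H$ into a commutative $C(K)$ and then invoke the bilinear theorem, but this introduces an extra constant and some work. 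In fact, historically and logically the direction is the reverse of what you suggest: the noncommutative \emph{little} Grothendieck theorem (Pisier 1978) predates and is simpler than the full bilinear noncommutative Grothendieck theorem (Haagerup 1985), and the standard proofs of the little theorem proceed directly via a Pietsch-type factorization/separation argument combined with operator-algebraic input, not by specializing the bilinear result. Your outline of the Pietsch/Hahn--Banach separation step producing the states $f_1,f_2$ from a uniform estimate is correct and is indeed where the proof effort goes once the core inequality is in hand.
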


Although the following corollary is folklore, we have not found any proof in the literature. Since it will be crucial for us, we give some hints about its proof.
\begin{corollary}\label{key cor}
	Let $A$ be a C$^*$-algebra and $H$ be a Hilbert space. Then, any bounded linear map $T:A\rightarrow H$ verifies that $$\|T:A\rightarrow R_H+C_H\|_{cb}\leq 2\|T\|.$$
\end{corollary}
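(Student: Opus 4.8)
The plan is to deduce Corollary \ref{key cor} from Theorem \ref{NClittleGrothendieck} by unpacking what the inequality $\|T(x)\|\leq \|T\|(f_1(x^*x)+f_2(xx^*))^{1/2}$ says about the completely bounded norm of $T$ viewed as a map into $R_H+C_H$. Recall that $R_H+C_H=(R_H\cap C_H)^*$ completely isometrically, and that the predual description of $R_H\cap C_H$ gives a concrete way to compute norms on $M_d(R_H+C_H)$: an element of $M_d(R_H+C_H)$, i.e.\ a matrix $[\xi_{kl}]$ with entries in $H$, can be written as a sum $\xi_{kl}=\eta_{kl}+\zeta_{kl}$, and its norm is the infimum over such decompositions of $\max\{\|[\eta_{kl}]\|_{M_d(R_H)},\|[\zeta_{kl}]\|_{M_d(C_H)}\}$ (up to the standard factor-of-$2$ subtleties of the $+$/$\cap$ duality — this is exactly where a constant $2$, and no better, will naturally enter). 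So the first step is to write down these formulas explicitly, using that $\|[\eta_{kl}]\|_{M_d(R_H)}=\|\sum_{k,l}\eta_{kl}\eta_{k'l}^{\dagger}\|^{1/2}$-type expressions (Gram matrix in the row direction) and the transposed expression for columns.

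The second, and main, step is the following: given $T:A\to H$ bounded, I want to bound $\|\uno\otimes T:M_d(A)\to M_d(R_H+C_H)\|$ uniformly in $d$. Take $x=[x_{kl}]\in M_d(A)$ with $\|x\|_{M_d(A)}\leq 1$. Apply the noncommutative little Grothendieck theorem not to $T$ directly but in a form adapted to matrix amplification: the states $f_1,f_2$ from Theorem \ref{NClittleGrothendieck} produce, via the GNS construction, Hilbert space maps. Concretely, define column and row ``halves'' of $T$ by factoring through the GNS Hilbert spaces $L^2(f_2)$ (for the $xx^*$ term, giving the column part) and $L^2(f_1)$ (for the $x^*x$ term, giving the row part). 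The inequality in Theorem \ref{NClittleGrothendieck} says precisely that $T=S_C\circ\kappa_2+S_R\circ\kappa_1$ where $\kappa_i:A\to L^2(f_i)$ are the canonical (completely) contractive maps into column/row Hilbert space and $S_R,S_C$ have norm $\leq\|T\|$; moreover $\kappa_2$ is completely contractive as a map $A\to C_{L^2(f_2)}$ and $\kappa_1$ is completely contractive as a map $A\to R_{L^2(f_1)}$ (this is a standard fact: $x\mapsto \hat x\in L^2(f)$ is completely contractive into column space, and $x\mapsto \widehat{x^*}$ into row space). Therefore $\uno\otimes T$ decomposes as a sum of a map $M_d(A)\to M_d(C_H)$ of cb-norm $\leq\|T\|$ and a map $M_d(A)\to M_d(R_H)$ of cb-norm $\leq\|T\|$, and feeding this decomposition into the $R_H+C_H$ norm formula yields $\|\uno\otimes T\|\leq 2\|T\|$ (the $2$ coming from $\|\xi\|_{R+C}\leq\|\eta\|_R+\|\zeta\|_C$ for $\xi=\eta+\zeta$, or equivalently $\max\leq$ sum). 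Taking the supremum over $d$ gives the claim.

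The step I expect to be the real obstacle is making the GNS/factorization argument genuinely \emph{completely} bounded rather than just bounded: Theorem \ref{NClittleGrothendieck} is stated as a pointwise inequality on $A$, and one must check that the induced maps $A\to C_{L^2(f_2)}$ and $A\to R_{L^2(f_1)}$ have the right \emph{cb} behaviour, since only then does amplifying by $M_d$ cost nothing. The clean way is to invoke the known equivalence (for $A\to H$) between $\|T:A\to R_H+C_H\|_{cb}$ and the best constant $C$ in a pointwise domination $\|T(x)\|\leq C(f_1(x^*x)+f_2(xx^*))^{1/2}$ — this is essentially the defining property of the operator space $R_H+C_H$ and its relation to the space $OH$/column-row decompositions — so that Theorem \ref{NClittleGrothendieck} is \emph{exactly} the statement that this constant is $\leq\|T\|$ times an absolute factor. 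I would phrase the proof hint as: combine Theorem \ref{NClittleGrothendieck} with the standard identification of $\|\cdot\|_{cb}$ for maps into $R_H+C_H$ in terms of such two-sided state domination, noting that the column piece $x\mapsto\widehat x\in C_{L^2(f_2)}$ and the row piece $x\mapsto\widehat{x^*}\in R_{L^2(f_1)}$ are each complete contractions after rescaling by $\|T\|$, and then use $\|u+v\|_{cb,\,A\to R_H+C_H}\leq\|u\|_{cb,\,A\to R_H}+\|v\|_{cb,\,A\to C_H}$ to collect the factor $2$.
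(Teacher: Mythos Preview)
Your proposal is correct and follows essentially the same route as the paper: use Theorem \ref{NClittleGrothendieck} to produce states $f_1,f_2$, pass to the GNS Hilbert spaces, split $T=T_1+T_2$ with $T_1$ completely bounded into $R_H$ and $T_2$ completely bounded into $C_H$ (each with cb-norm $\leq\|T\|$), and conclude by the triangle inequality for $R_H+C_H$. The one place where you are slightly glib---``the inequality says precisely that $T=S_C\circ\kappa_2+S_R\circ\kappa_1$''---is exactly what the paper makes explicit via the orthogonal projection $p$ onto the diagonal $E=\overline{\{[x]\oplus[x]\}}\subset H_1\oplus H_2$: the domination inequality only defines $\tilde T$ on $E$, and composing with $p$ (equivalently, extending $\tilde T$ by zero on $E^\perp$) is what produces the individual pieces $T_i=\tilde T\circ p\circ j_i$ with the required norm bound.
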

\begin{proof}
The key idea is to use the non-commutative little Grothendieck theorem to decompose $T$ into two maps $T = T_1 + T_2 $ such that
	\begin{equation}\label{key cor_Eq1}
	\max\{\| T_1: A \rightarrow R_H \|_{cb}, \, \| T_2 : A \rightarrow C_H \|_{cb}\} \le \|  T : A \rightarrow H \|.
	\end{equation}
	With this at hand, the statement is obtained noticing that:
	\begin{align*}
	\| T: A \rightarrow R_H + C_H \|_{cb} \le \| T_1 \|_{cb} + \| T_2 \|_{cb} \le 2 \| T \|.
	\end{align*} 
	
	Therefore, the main part of the proof consists on constructing $T_1$ and $T_2$ with the claimed properties. For that,  observe that the states $f_1$ and $f_2$ from Theorem \ref{NClittleGrothendieck}  define pre-inner products $\langle  x , y \rangle_1  : = f_1(x y^*), \ \langle x, y \rangle_2  : = f_2(x^* y),$ for any $x, \, y \in A$, which naturally induce Hilbert spaces $H_1$, $H_2$ in the obvious way.
	
	Given that, we consider the Hilbert space $H_1 \oplus H_2 $ and the injections:
	$$
	\begin{array}{r c  c c r c c c}
	j_1: & A  &\longrightarrow & H_1 \oplus H_2    & ,\qquad \qquad j_2  :& A  &\longrightarrow & H_1 \oplus H_2   \\
	&  x  & \mapsto  & [x] \oplus 0    & &   x  & \mapsto  & 0 \oplus [x]
	\end{array} .
	$$
	Next, we are interested in the projection, $p$, on the subspace $E = \overline{ \{   [x] \oplus [x] \, : \, x \in A\} } \subset H_1 \oplus H_2 $, in which we can understand the original map $T$ acting as:
	$$
	\begin{array}{r c  c c }
	\tilde T: & E  &\longrightarrow & H      \\
	&  [x]\oplus [x]  & \mapsto  & T(x)  
	\end{array} .
	$$
	
	The maps we are looking for are  constructed composing the previous building blocks:
	$$
	\text{ for $i=1 $ or $2$,} \qquad T_i:  A   \stackrel{j_i}{\longrightarrow}  H_1\oplus H_2    \stackrel{p}{\longrightarrow} E  \stackrel{\tilde T}{\longrightarrow}  H .   
	$$
	
	One can check that the previous maps are well defined, that they verify $T=T_1 + T_2$ by construction and that \eqref{key cor_Eq1} is implied by the claim in Theorem \ref{NClittleGrothendieck}.
	
\end{proof}

The following is a consequence of the previous corollary combined with the extension property of $(2,RC)$-summing maps.
\begin{corollary}\label{cor 2}
	Let $X$ be an operator space and $H$ be a Hilbert space. Then, any $(2,RC)$-summing map $T:X\rightarrow H$ verifies that $$\|T:X\rightarrow R_H+C_H\|_{cb}\leq 2\pi_{2,RC}(T).$$
\end{corollary}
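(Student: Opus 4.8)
The plan is to combine the factorization theorem for $(2,RC)$-summing maps with the extension property for $2$-summing maps and then invoke Corollary~\ref{key cor}. First I would record the factorization that $(2,RC)$-summing maps enjoy: since $R\cap C$ is a homogeneous Hilbertian operator space that is stable under ultraproducts (Remark~\ref{remark_ultraproducts}), the Pisier-type factorization applies and yields an ultrafilter $\mathcal{U}$, unit vectors $(a_i)_i,(b_i)_i$ in $S_4(H_0)$ for the ambient $B(H_0)\supseteq X$, a closed subspace $E_2\subseteq \prod S_2/\mathcal{U} = S_2(\hat{H})$, and a map $u:E_2\to H$ with $\|u\|=\pi_{2,RC}(T)$, factoring $T$ as $X\stackrel{j}{\hookrightarrow}\prod B(H_0)/\mathcal{U}\stackrel{\mathcal M}{\to}\prod S_2/\mathcal{U}$ followed by $u$ on $E_2=\overline{\mathcal M(j(X))}$. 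The point of using $R\cap C$ rather than $OH$ in the tensor product is precisely that the multiplication map $\mathcal M$ lands completely boundedly into an ultraproduct of copies of $S_2$ equipped with the $R\cap C$ structure (on each factor, $x\mapsto a_i x b_i$ is completely contractive from $B(H_0)$ into $(S_2)_{R\cap C}$ essentially because $a_i,b_i$ are Hilbert–Schmidt of unit norm in $S_4$); the ultraproduct is then $R_{\hat H}\cap C_{\hat H}$ by Remark~\ref{remark_ultraproducts}.

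Next I would push $T$ through as a genuinely bounded map on a $C^*$-algebra. Concretely, the composition $\mathcal M\circ j : X\to R_{\hat H}\cap C_{\hat H}$ extends to a map on the full ultraproduct $\prod B(H_0)/\mathcal{U}$, which is a $C^*$-algebra, and $u:E_2\to H$ is bounded with $\|u\|=\pi_{2,RC}(T)$. So I need a bounded map from a $C^*$-algebra to a Hilbert space whose $cb$-norm into $R+C$ I want to control. This is exactly Corollary~\ref{key cor}: any bounded $S:\mathcal A\to H$ satisfies $\|S:\mathcal A\to R_H+C_H\|_{cb}\le 2\|S\|$. The remaining task is to transport this bound back along $j$ and along the inclusion $E_2\subseteq R_{\hat H}\cap C_{\hat H}$, using that $j$ is a complete isometry, that $\mathcal M$ is completely contractive into $R_{\hat H}\cap C_{\hat H}$, and that $R\cap C\hookrightarrow R+C$ behaves well under these compositions. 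Since $(R\cap C)^* = R+C$ and duality of the tensor norms is compatible, the composite $X\to R_{\hat H}+C_{\hat H}\to R_H+C_H$ (the last arrow coming from $u$, which must be checked to be $cb$ as a map between the $R+C$ structures with $\|u\|_{cb}\le\|u\|$ — true because $u$ is a restriction of a map defined via the Hilbert space inner product) has $cb$-norm at most $2\|u\| = 2\pi_{2,RC}(T)$, giving the claim.

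The main obstacle I anticipate is bookkeeping the operator space structures across the factorization: one must verify that $u:E_2\to R_H+C_H$ is completely bounded with $\|u\|_{cb}\le \|u\|$ (equivalently, that on a subspace of $R_{\hat H}\cap C_{\hat H}$ with values in a Hilbert space, boundedness automatically upgrades to complete boundedness into $R+C$ with the same constant up to the factor absorbed by Corollary~\ref{key cor}), and that extending from $E_2$ to the whole $C^*$-algebra ultraproduct does not cost anything. The cleanest route is probably: extend $u$ (which is defined on a subspace of a Hilbert space, hence has a norm-preserving Hilbert-space extension) to all of $R_{\hat H}\cap C_{\hat H}$, precompose with $\mathcal M$ to get a bounded map $S:\prod B(H_0)/\mathcal U\to H$ with $\|S\|\le \pi_{2,RC}(T)$, apply Corollary~\ref{key cor} to get $\|S:\prod B(H_0)/\mathcal U\to R_H+C_H\|_{cb}\le 2\pi_{2,RC}(T)$, and finally restrict along the complete isometry $j$ to conclude $\|T:X\to R_H+C_H\|_{cb}\le 2\pi_{2,RC}(T)$. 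The subtlety is ensuring the intermediate Hilbert-space extension of $u$ is compatible with the $R\cap C$ structure so that the precomposition $S$ is literally bounded (not merely $cb$) — but this is where the extension property of $2$-summing maps quoted in the preliminaries does the work, since $(2,RC)$-summing maps inherit it from the $2$-summing case applied factor-wise.
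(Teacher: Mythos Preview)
Your ``cleanest route'' ultimately lands on the same mechanism as the paper's proof---produce a bounded map from a $C^*$-algebra into $H$ that restricts to $T$, apply Corollary~\ref{key cor}, and pull back along a complete isometry---but you reach it through unnecessary machinery and one genuine gap. The paper's argument is three lines: embed $X\hookrightarrow B(K)$ completely isometrically, invoke the extension property of $(2,RC)$-summing maps \cite[Proposition~0.4]{JuPi95} to obtain $\tilde T:B(K)\to H$ with $\pi_{2,RC}(\tilde T)\le\pi_{2,RC}(T)$, apply Corollary~\ref{key cor} to $\tilde T$ (using $\|\tilde T\|\le\pi_{2,RC}(\tilde T)$), and restrict along $j$. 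No ultraproducts, no factorization diagram.

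The gap in your route is that you assume a Pisier-type factorization theorem for $(2,RC)$-summing maps, with multipliers $a_i,b_i\in S_4$ and $\|u\|=\pi_{2,RC}(T)$. The paper only states such factorizations for completely $p$-summing and $(p,cb)$-summing maps (Remark~\ref{Rem-factorization (p,cb)}); the $(2,RC)$ case is a different weight and is not covered by what is written. Your justification (``$R\cap C$ is homogeneous Hilbertian and stable under ultraproducts, so the Pisier-type factorization applies'') is not an argument. If you grant the factorization, then your steps (extend $u$ by orthogonal projection on the Hilbert space $\prod S_2/\mathcal U$, precompose with the contractive $\mathcal M$, apply Corollary~\ref{key cor} on the ultraproduct $C^*$-algebra, restrict along $j$) do work---but note that none of this uses the $R\cap C$ operator space structure on $\prod S_2/\mathcal U$ at all, only its Banach space structure, so the first paragraph of your proposal is irrelevant. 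Your closing remark that the extension property of $2$-summing maps ``does the work'' here ``factor-wise'' is confused: the extension of $u$ is just Hilbert-space orthogonal projection and has nothing to do with $2$-summing maps. The actual extension property you need is precisely the one the paper cites directly from \cite{JuPi95}, which short-circuits your entire detour.
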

\begin{proof}
	Let $K$ be a Hilbert space such that $j:X\iny B(K)$ is a complete isometry. According to \cite[Proposition 0.4]{JuPi95}, there exists a linear map $\tilde{T}:B(K)\rightarrow H$ such that $T= \tilde{T}\circ j$ and $\pi_{2,RC}(\tilde{T})\leq \pi_{2,RC}(T)$. Now, it follows from Corollary \ref{key cor} that $$\|\tilde{T}:B(K)\rightarrow R_H+C_H\|_{cb}\leq 2\|\tilde{T}:B(K)\rightarrow H\|\leq 2\pi_{2,RC}(\tilde{T})\leq 2\pi_{2,RC}(T).$$ Hence, since $j:X\iny B(K)$ is a complete isometry, the previous inequality implies that 
	$$\|T:X\rightarrow R_H+C_H\|_{cb}\leq 2\pi_{2,RC}(T)$$as we wanted. 
\end{proof}

Finally, we will also need  the following (well known) lemma, which relates the completely bounded norm and the 2-summing norm.
\begin{lemma}\label{lemma: 2-sum}
	Let $H$ be a Hilbert space endowed with one of the following operator space structures: $R$, $C$ or $R\cap C$. Then, for any linear map $T:C(K)\rightarrow H$ we have $\|T\|_{cb}=\pi_2(T)$, where $C(K)$ denotes the space of complex continuous function on a compact space $K$.
\end{lemma}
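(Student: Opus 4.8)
The plan is to prove the two inequalities $\pi_2(T)\le\|T\|_{cb}$ and $\|T\|_{cb}\le\pi_2(T)$ for each of the three structures, reducing the $R\cap C$ case to the row and column cases. Note first that $\pi_2(T)$ depends only on the Banach space structure of $H$. For the reduction, by the defining complete isometry \eqref{embedding intersec} together with the isometric identification $M_n(R_H\oplus_\infty C_H)=M_n(R_H)\oplus_\infty M_n(C_H)$, one gets
$$\|T:C(K)\to R_H\cap C_H\|_{cb}=\max\{\|T:C(K)\to R_H\|_{cb},\,\|T:C(K)\to C_H\|_{cb}\};$$
since moreover the identity map is a complete contraction from $R_H\cap C_H$ onto each of $R_H$ and $C_H$, it suffices to prove the lemma for $H$ carrying the row and the column operator space structures.

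For the bound $\pi_2(T)\le\|T:C(K)\to C_H\|_{cb}$, I would fix $f_1,\dots,f_m\in C(K)$ and let $F\in M_m(C(K))$ be the matrix whose first column is $(f_1,\dots,f_m)^t$ and whose other entries vanish. Evaluating at $t\in K$ and using $M_m(C(K))=C(K,M_m)$ one sees $\|F\|_{M_m(C(K))}=\sup_{t\in K}\big(\sum_{k=1}^m|f_k(t)|^2\big)^{1/2}$, while viewing $M_m(C_H)\subseteq M_m(B(\C,H))=B(\C^m,H^{\oplus m})$ one computes directly that $(\uno\otimes T)F$ is the operator $(\xi_j)_j\mapsto \xi_1\,(Tf_1,\dots,Tf_m)^t$, of norm $\big(\sum_{k=1}^m\|Tf_k\|^2\big)^{1/2}$. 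Hence $\big(\sum_k\|Tf_k\|^2\big)^{1/2}\le\|T\|_{cb}\sup_{t}\big(\sum_k|f_k(t)|^2\big)^{1/2}$, and taking the supremum over all such families gives $\pi_2(T)\le\|T\|_{cb}$. Placing the $f_k$ in the first \emph{row} rather than the first column gives the same bound for $R_H$, and the $R_H\cap C_H$ case then follows since $\|T:C(K)\to R_H\cap C_H\|_{cb}\ge\|T:C(K)\to C_H\|_{cb}\ge\pi_2(T)$.

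For the reverse inequality I would invoke Pietsch's factorization theorem for $2$-summing maps (see \cite[Theorem 2.13 and Corollary 2.16]{DiJaTo95}), which applied to $X=C(K)$ produces a Borel probability measure $\mu$ on $K$ with $\|Tf\|_H\le\pi_2(T)\,\|f\|_{L_2(\mu)}$ for all $f\in C(K)$; thus $T$ factors as $C(K)\xrightarrow{i_\mu}L_2(\mu)\xrightarrow{u}H$, where $i_\mu$ is the canonical map and $u$, defined on $\overline{i_\mu(C(K))}$ and extended by zero, satisfies $\|u\|\le\pi_2(T)$. Endow $L_2(\mu)$ with the column structure $C_{L_2(\mu)}$. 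Two facts finish the argument: (i) $i_\mu:C(K)\to C_{L_2(\mu)}$ is completely contractive, because for $F\in M_n(C(K))$ with $\|F\|\le1$ one has $\int_K F(t)^*F(t)\,d\mu(t)\le\uno_n$ in $M_n$, which is exactly the statement that $\|(\uno\otimes i_\mu)F\|_{M_n(C_{L_2(\mu)})}\le1$; and (ii) the column operator space is homogeneous, so the bounded Hilbert-space map $u$ satisfies $\|u:C_{L_2(\mu)}\to C_H\|_{cb}=\|u\|\le\pi_2(T)$. Composing yields $\|T:C(K)\to C_H\|_{cb}\le\pi_2(T)$, and replacing columns by rows throughout gives the same bound for $R_H$; the $R_H\cap C_H$ case is then handled by the maximum formula from the first paragraph. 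The only mildly delicate points are the explicit identifications of the matrix norms on $M_m(R_H)$, $M_m(C_H)$ and $M_n(C_{L_2(\mu)})$ in terms of the embeddings $C_H\cong B(\C,H)$ and $R_H\cong B(H,\C)$; these are standard and I do not anticipate a genuine obstacle.
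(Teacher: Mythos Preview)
Your proof is correct, but it follows a genuinely different route from the paper's. The paper obtains both inequalities in one stroke: invoking \cite[Proposition 5.11]{PisierOH} it writes
\[
\|T:C(K)\to R_H\|_{cb}=\|\mathrm{id}\otimes T:R\otimes_{min}C(K)\to R\otimes_{min}R_H\|,
\]
and then identifies $R\otimes_{min}C(K)=\ell_2\otimes_\epsilon C(K)$ (because $C(K)$ is commutative) and $R\otimes_{min}R_H=\ell_2(H)$, so the right-hand side \emph{is} $\pi_2(T)$ by definition. The column and $R\cap C$ cases are then declared analogous. Your argument instead proves the two inequalities separately: the bound $\pi_2(T)\le\|T\|_{cb}$ by a direct matrix computation with a single nonzero column (or row), and the bound $\|T\|_{cb}\le\pi_2(T)$ via Pietsch factorization together with the complete contractivity of $i_\mu:C(K)\to C_{L_2(\mu)}$ and the homogeneity of the column structure. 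The paper's approach is shorter and exposes the structural reason for the equality (namely that the minimal and injective tensor norms on $\ell_2\otimes C(K)$ coincide), at the cost of importing an external proposition; your approach is more self-contained and makes the role of the factorization theorem for $2$-summing maps explicit, which fits nicely with the surrounding discussion in Section~\ref{subsec p-summing}. Both handle the $R\cap C$ case the same way, by reduction to $R$ and $C$.
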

\begin{proof}
	Let us first show the result for $R$. To this end, we use \cite[Proposition 5.11]{PisierOH} to state that $$\|T:C(K)\rightarrow R_H\|_{cb}=\|id\otimes T:R\otimes_{min}C(K)\rightarrow R\otimes_{min}R_H\|.$$Now, the fact that $C(K)$ is a commutative C$^*$-algebra guarantees that $R\otimes_{min}C(K)=\ell_2\otimes_\epsilon C(K)$ isometrically (see for instance \cite[Proposition 1.10]{Pisierbook}). Moreover, it is easy to see that, also isometrically, $R\otimes_{min}R_H=\ell_2(H)$. Hence, 
	$$\|T:C(K)\rightarrow R_H\|_{cb}=\|id\otimes T:\ell_2\otimes_\epsilon C(K)\rightarrow \ell_2(H)\|=\pi_2(T).$$
	
	The proof for the $C$ structure is completely analogous and the proof for $R\cap C$ follows easily from its definition  and the estimates for $R$ and $C$.
\end{proof}

\subsection{Weights on Banach spaces}\label{subsec:Weights}

Given a Banach space $X$, following \cite{PisierOH} we denote
$$
(X \otimes \overline X)_+  = \lbrace	u \in X \otimes \overline{X} \ : \ \langle u, \xi \otimes \overline \xi \rangle \ge 0 \quad \forall \xi \in X^*	 \rbrace.
$$
Here $\overline{X}$ is the Banach space conjugate to $X$, that is simply $X$ itself but equipped with the complex conjugate multiplication. Note that $(\overline{X})^*$ can be naturally identified with  $\overline{X^*}$. Moreover, the elements in $(X \otimes \overline X)_+$ can be understood as positive sesquilinear forms on $X^* \times X^*$ and can be always written as:
$$
u = \sum_{i=1}^n x_i \otimes \overline{x_i},
$$
for some finite set $x_1, \cdots, x_n \in X$.

Since $(X \otimes \overline X)_+$ is a cone, it naturally defines an order in $(X \otimes \overline X)$. In particular, note that $\sum_{i=1}^n x_i \otimes \overline{x_i}\leq  \sum_{j=1}^m y_i \otimes \overline{y_i}$ if and only if $$\sum_{i=1}^n|\xi(x_i)|^2\leq  \sum_{j=1}^m |\xi(y_i) |^2\hspace{0.3 cm} \text{ for every $\xi\in X^*$.}$$

The following proposition, proved in \cite[Proposition 2.2]{Pisier_Factorization}, will be very useful later.
\begin{prop}\label{Prop Order}
Given two elements $u=\sum_{i=1}^n x_i \otimes \overline{x_i},\, v= \sum_{j=1}^m y_i \otimes \overline{y_i}$ in $(X \otimes \overline X)_+$, we have that  $u\leq v$ if and only if there is a contraction $a:\ell_2^m\rightarrow \ell_2^n$ such that $$(a\otimes id)\Big(\sum_{j=1}^me_i\otimes y_j\Big)=\sum_{i=1}^n e_i\otimes x_i,$$where here $(e_i)_i$ denotes any orthonormal basis and $id:X\rightarrow X$.
\end{prop}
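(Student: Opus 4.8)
The plan is to prove both directions of Proposition~\ref{Prop Order} by a direct linear-algebra argument, working with the natural bilinear pairing between $X \otimes \overline X$ and sesquilinear forms on $X^* \times X^*$. Write $u = \sum_{i=1}^n x_i \otimes \overline{x_i}$ and $v = \sum_{j=1}^m y_j \otimes \overline{y_j}$, and recall from the preceding discussion that $u \leq v$ in the cone order means exactly that $\sum_{i=1}^n |\xi(x_i)|^2 \leq \sum_{j=1}^m |\xi(y_j)|^2$ for every $\xi \in X^*$. The key observation is to package the vectors into operators: define $\Phi_x : X^* \to \ell_2^n$ by $\Phi_x(\xi) = \sum_{i=1}^n \xi(x_i)\, e_i$ and similarly $\Phi_y : X^* \to \ell_2^m$ by $\Phi_y(\xi) = \sum_{j=1}^m \xi(y_j)\, e_j$. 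Then $\|\Phi_x(\xi)\|^2 = \sum_i |\xi(x_i)|^2$ and $\|\Phi_y(\xi)\|^2 = \sum_j |\xi(y_j)|^2$, so the hypothesis $u \leq v$ is precisely the statement $\|\Phi_x(\xi)\| \leq \|\Phi_y(\xi)\|$ for all $\xi \in X^*$.

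For the implication ``$u \leq v \Rightarrow$ existence of a contraction'', I would argue as follows. The inequality $\|\Phi_x(\xi)\| \leq \|\Phi_y(\xi)\|$ shows in particular that $\ker \Phi_y \subseteq \ker \Phi_x$ (as subspaces of $X^*$), hence there is a well-defined linear map $b : \operatorname{im}\Phi_y \to \operatorname{im}\Phi_x \subseteq \ell_2^n$ with $b \circ \Phi_y = \Phi_x$, and the norm estimate gives $\|b(\Phi_y(\xi))\| = \|\Phi_x(\xi)\| \leq \|\Phi_y(\xi)\|$, so $b$ is a contraction on $\operatorname{im}\Phi_y$. Extend $b$ to a contraction $a : \ell_2^m \to \ell_2^n$ by setting it to zero on $(\operatorname{im}\Phi_y)^\perp$ (since these are finite-dimensional Hilbert spaces there is no subtlety). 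It remains to check the algebraic identity $(a \otimes \id)\big(\sum_j e_j \otimes y_j\big) = \sum_i e_i \otimes x_i$ in $\ell_2 \otimes X$; this is verified by pairing both sides against an arbitrary functional $\xi \in X^*$ in the $X$-slot: the left side becomes $a\big(\sum_j \xi(y_j) e_j\big) = a(\Phi_y(\xi)) = \Phi_x(\xi) = \sum_i \xi(x_i) e_i$, which is exactly the pairing of the right side against $\xi$. Since functionals separate points of $\ell_2 \otimes X$ (it is finite-dimensional), the identity holds.

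For the converse ``existence of a contraction $\Rightarrow u \leq v$'', suppose $a : \ell_2^m \to \ell_2^n$ is a contraction with $(a \otimes \id)\big(\sum_j e_j \otimes y_j\big) = \sum_i e_i \otimes x_i$. Pairing against $\xi \in X^*$ as above yields $a(\Phi_y(\xi)) = \Phi_x(\xi)$, and contractivity of $a$ gives $\|\Phi_x(\xi)\| = \|a(\Phi_y(\xi))\| \leq \|\Phi_y(\xi)\|$, i.e.\ $\sum_i |\xi(x_i)|^2 \leq \sum_j |\xi(y_j)|^2$ for all $\xi \in X^*$, which is exactly $u \leq v$. This direction is essentially immediate once the operator reformulation is in place.

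I expect the only real point requiring care to be the bookkeeping that translates between the tensor $\sum_i e_i \otimes x_i \in \ell_2 \otimes X$ and the operator $\Phi_x : X^* \to \ell_2$ — in particular making sure the slot conventions (which tensor leg carries $\ell_2^n$ versus $\ell_2^m$, and the direction $\ell_2^m \to \ell_2^n$ of $a$) are consistent throughout, and that the well-definedness of $b$ uses the kernel containment correctly. Since everything takes place in finite dimensions (only finitely many $x_i$, $y_j$ are involved), all the subtleties about closed ranges, extensions of contractions, and separation by functionals are trivial, so no analytic input is needed; this is why the result is attributed to a short argument in \cite{Pisier_Factorization}. One should simply remark at the end that although $X^*$ is infinite-dimensional, the maps $\Phi_x$, $\Phi_y$ factor through the finite-dimensional space spanned by evaluation at $x_1,\dots,x_n,y_1,\dots,y_m$, so the constructions above are legitimate.
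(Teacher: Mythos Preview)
The paper does not give its own proof of this proposition; it simply cites \cite[Proposition~2.2]{Pisier_Factorization}. Your argument is correct and is in fact the standard Douglas-type factorization proof one finds in Pisier's book: rephrase $u\le v$ as the operator inequality $\|\Phi_x(\xi)\|\le\|\Phi_y(\xi)\|$, use the kernel inclusion to define a contraction on $\operatorname{im}\Phi_y$, and extend by zero on the orthogonal complement. One small presentational point: the parenthetical ``it is finite-dimensional'' after ``functionals separate points of $\ell_2\otimes X$'' is not literally true when $X$ is infinite-dimensional, but you recover this correctly in your final remark (everything lives in $\ell_2^n\otimes\operatorname{span}\{x_i,y_j\}$); alternatively, Hahn--Banach already guarantees that the slice maps $\id\otimes\xi$ separate $\ell_2^n\otimes X$ without any dimension hypothesis.
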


\begin{definition}\label{Def_weight}
	We say that $ w : (X \otimes \overline{X} )_+  \rightarrow \mathbb{R}_+ $ is a weight on $(X \otimes \overline{X} )_+$ if for all $ u, \, v \in ( X \otimes \overline{X} )_+ $:
	\begin{enumerate}
		\item [i.] (positive homogeneity) $w (t u) = t\,  w(u)$ for any $t \geq 0$;
		\item [ii.]  (subadditivity)  $w (u+ v ) \le  w(u) + w(v)$;
		\item [iii.]  (monotonicity) if $ u \le v $, $w(u) \le w(v)$.
	\end{enumerate}
\end{definition} 

The appearance of this notion of weights in our work is in part due to the nice duality theory displayed by  the  \textit{gamma-norms} introduced by Pisier in \cite{Pisier_Factorization}. In particular, Theorem 6.1 in \cite{PisierOH}  will play an important role for us. In order to state it, we need to introduce the following generalization of 2-summing norms:
Given Banach spaces $X$, $Y$ and a weight $\omega$ on $(X \otimes \overline{X} )_+$, a linear map $u : X \rightarrow Y$ is said to be  $(2,w)$-summing if there exists a constant $C$ such that for any finite sequence of elements $(x_i)_i$  in $X$ we have
	$$
	\left( \sum_i \| u(x_i) \|^2_X \right)^{\frac{1}{2}} \le C  \left( w \big(\sum_i x_i \otimes \overline x_i \big) \right)^{\frac{1}{2}}. 
	$$ The infimum of the constants for which the previous holds will be denoted  $\pi_{2,w}(u)$. It is not difficult to show that $\pi_{2,w}$ is in fact a norm.

\begin{remark}
In fact, the norm $\pi_{2,RC}$ defined in Equation \eqref{Def:pi_2,RcapC} can be understood as the $\pi_{2,w}$ norm associated to the weight $w \left(  \sum_i x_i \otimes \overline{x}_i \right)  =  \big\| \sum_i e_i \otimes x_i   \big \|_{ (R\cap C) \otimes_{min} X }^2 $  on $(X \otimes \overline{X} )_+$. On the contrary, this is not the case for  $\pi_{2,R + C}$ in \eqref{Def:pi_2,R+C}. However, in Section \ref{Sec: Main result} we construct a related weight that circumvents this problem at the expense of a multiplicative factor of $2$ (see Lemma \ref{Lemma_weight_R+C} for an explicit statement).  
\end{remark}

Now we can state the duality theorem for gamma-norms:
\begin{theorem}[\cite{PisierOH}, Thm. 6.1]\label{Thm_gammas}
Given Banach spaces $X,$ $Y$ and weights $w_1$ on $(X \otimes \overline{X} )_+$ and $w_2$ on $(Y \otimes \overline{Y} )_+$, consider the semi-norm on $X \otimes Y$:
	$$
	\gamma(u) := \inf_{u = \sum_i x_i \otimes y_i}  w_1\left(\sum_i x_i \otimes \overline{x}_i  \right)^{1/2} \, w_2\left(\sum_i y_i \otimes \overline{y}_i  \right)^{1/2}.
	$$
	Given a linear form $V$ on $ X\otimes Y$, we define
	$$
	\gamma^*(V) = \sup_{u\in X\otimes Y \, : \, \gamma(u) \le 1} | V(u) |.
	$$
	
	Then, if $\gamma^*(V) < \infty$,
	$$
	\gamma^*(V) := \inf \{ \pi_{2,w_1}(v_1) \, \pi_{2,w_2}(v_2^*) \},
	$$
	where the infimum runs over all $v_1 : X \rightarrow H$, $v_2: Y \rightarrow H^*$ such the operator $v: X \rightarrow Y^*$ associated to $V$ factorizes as $v = v_2^*\circ v_1$. 
\end{theorem}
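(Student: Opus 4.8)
The plan is to prove the two inequalities making up the stated identity separately; the bound $\gamma^*(V)\le\inf\{\pi_{2,w_1}(v_1)\,\pi_{2,w_2}(v_2^*)\}$ is the elementary half, while the reverse bound is a Hahn--Banach / Hilbert-space-factorization argument (a weighted version of Kwapie\'n's theorem). Throughout I write $v\colon X\to Y^*$ for the operator associated with $V$, so that $V(x\otimes y)=\langle v(x),y\rangle$, and I normalize so that $\gamma^*(V)\le 1$.

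For the easy inequality I would fix an arbitrary factorization $v=v_2^*\circ v_1$ with $v_1\colon X\to H$ and $v_2\colon Y\to H^*$, take any $u\in X\otimes Y$ with $\gamma(u)\le 1$, and a near-optimal representation $u=\sum_i x_i\otimes y_i$; then $V(u)=\sum_i\langle v_1(x_i),v_2(y_i)\rangle$, and Cauchy--Schwarz for the $H$--$H^*$ pairing gives $|V(u)|\le\big(\sum_i\|v_1(x_i)\|^2\big)^{1/2}\big(\sum_i\|v_2(y_i)\|^2\big)^{1/2}$. By the definition of the $(2,w_j)$-summing norms this is at most $\pi_{2,w_1}(v_1)\,\pi_{2,w_2}(v_2^*)\,w_1\big(\sum_i x_i\otimes\overline x_i\big)^{1/2}w_2\big(\sum_i y_i\otimes\overline y_i\big)^{1/2}$; taking the infimum over representations of $u$ yields $|V(u)|\le\pi_{2,w_1}(v_1)\,\pi_{2,w_2}(v_2^*)\,\gamma(u)$, hence the claim.

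For the reverse inequality I would start by unwinding the hypothesis $\gamma^*(V)\le1$ into the statement that $\big|\sum_i V(x_i\otimes y_i)\big|\le w_1\big(\sum_i x_i\otimes\overline x_i\big)^{1/2}w_2\big(\sum_i y_i\otimes\overline y_i\big)^{1/2}$ for all finite families $(x_i)\subset X$, $(y_i)\subset Y$; substituting $x_i\rightsquigarrow t x_i$, $y_i\rightsquigarrow t^{-1}y_i$ and optimizing over $t>0$ converts this into the equivalent ``linearized'' form $\re\sum_i V(x_i\otimes y_i)\le\tfrac12 w_1\big(\sum_i x_i\otimes\overline x_i\big)+\tfrac12 w_2\big(\sum_i y_i\otimes\overline y_i\big)$. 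The core step --- and the one I expect to be the main obstacle --- is then a separation (equivalently, minimax) argument that replaces the sublinear weights $w_1,w_2$ by \emph{positive linear} minorants: working in the real subspaces of hermitian tensors of $X\otimes\overline X$ and $Y\otimes\overline Y$, I would produce positive linear functionals $\phi_1\le w_1$ on $(X\otimes\overline X)_+$ and $\phi_2\le w_2$ on $(Y\otimes\overline Y)_+$ with $\re\sum_i V(x_i\otimes y_i)\le\tfrac12\phi_1\big(\sum_i x_i\otimes\overline x_i\big)+\tfrac12\phi_2\big(\sum_i y_i\otimes\overline y_i\big)$ for all finite families. The delicate points here are to extend each $w_j$ to a genuine sublinear functional on the linear span of its cone so that the linear minorants produced by Hahn--Banach are automatically monotone (hence positive) on the cone --- this is exactly where the monotonicity and subadditivity of Definition \ref{Def_weight} are used --- and to invoke weak-$*$ compactness of the set of admissible $\phi_j$ in order to pass from an estimate up to $1+\varepsilon$ to the sharp one.

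Finally I would convert this data into the desired factorization by a GNS-type construction. Each $\phi_j$ is a positive sesquilinear form, so after quotienting by its kernel and completing one obtains Hilbert spaces $H_1,H_2$ with canonical maps $q_1\colon X\to H_1$, $q_2\colon Y\to H_2$ satisfying $\sum_i\|q_1(x_i)\|^2=\phi_1\big(\sum_i x_i\otimes\overline x_i\big)\le w_1\big(\sum_i x_i\otimes\overline x_i\big)$ and likewise for $q_2$, i.e.\ $\pi_{2,w_1}(q_1)\le1$ and $\pi_{2,w_2}(q_2)\le1$. Specializing the displayed inequality to one-term families and rotating the phase of $x$ gives $|\langle v(x),y\rangle|\le\|q_1(x)\|\,\|q_2(y)\|$, so $(x,y)\mapsto\langle v(x),y\rangle$ descends to a contractive bilinear form on $H_1\times H_2$, namely $\langle b\,q_1(x),q_2(y)\rangle$ for some contraction $b$. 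Taking $v_1=b\circ q_1$ and $v_2=q_2$ (up to the obvious Hilbert-space identifications) produces a factorization $v=v_2^*\circ v_1$ with $\pi_{2,w_1}(v_1)\,\pi_{2,w_2}(v_2^*)\le\|b\|\le1$, which combined with the first part gives the claimed equality.
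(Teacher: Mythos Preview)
The paper does not supply its own proof of this statement: Theorem~\ref{Thm_gammas} is quoted verbatim from \cite[Theorem~6.1]{PisierOH} and used as a black box in Section~\ref{Sec: Main result}, so there is nothing in the paper to compare your argument against.

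That said, your outline is essentially Pisier's own proof and is correct in its broad strokes. The easy inequality via Cauchy--Schwarz is exactly right. For the hard direction, the scheme ``linearize by AM--GM, separate to replace the sublinear $w_j$ by positive linear $\phi_j\le w_j$, then run a GNS construction and factor through the resulting Hilbert spaces'' is precisely the argument in \cite{PisierOH} (and already in \cite{Pisier_Factorization} in the classical setting). One point worth sharpening: the two functionals $\phi_1$ and $\phi_2$ must be produced \emph{simultaneously}, so applying Hahn--Banach to each $w_j$ in isolation is not enough. The clean way is to work on the product of the hermitian spans, extend $\tfrac12 w_1\oplus\tfrac12 w_2$ to a single sublinear functional there (monotonicity of the $w_j$ is exactly what makes this extension well defined and forces any linear minorant to be positive on the cone, as you note), and then separate it from the superlinear functional
\[
q(\xi,\eta)=\sup\Big\{\re\sum_i V(x_i\otimes y_i):\ \sum_i x_i\otimes\overline x_i\le\xi,\ \sum_i y_i\otimes\overline y_i\le\eta\Big\};
\]
the linear functional produced by Hahn--Banach then splits as $\phi_1\oplus\phi_2$. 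Your alternative route via finite families plus weak-$*$ compactness also works. After that, your GNS step and the contraction $b$ giving $v=v_2^*\circ v_1$ with $\pi_{2,w_1}(v_1)\,\pi_{2,w_2}(v_2)\le1$ are exactly as in Pisier's argument.
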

\color{black}

\section{Main result}\label{Sec: Main result}

In this section we will prove our main result, that we state again for convenience.
\begin{theorem}\label{Main Thm}
There exists a universal constant $K$ such that for any linear map $T:X\rightarrow A^*$, where $X$ is an operator space and $A$ is a C$^*$-algebra, we have $$\|T\|_{cb}\leq K\pi_{1,cb}(T).$$
\end{theorem}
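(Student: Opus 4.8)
The plan is to establish the stronger factorization statement $\Gamma_{R\cap C}(T)\le K\,\pi_{1,cb}(T)$ announced in the introduction, and then read off Theorem~\ref{Main Thm} from the noncommutative little Grothendieck theorem. The starting point is that $\pi_{1,cb}(T)<\infty$ yields, via the factorization theorem of Remark~\ref{Rem-factorization (p,cb)} (with $p=1$), a commuting diagram in which $T$ factors as $X\xhookrightarrow{j}\prod B(H)/\mathcal U\xrightarrow{\mathcal M}\prod S_1/\mathcal U$ followed by a \emph{bounded} map $u$ of norm $\pi_{1,cb}(T)$ defined on the closure $E_1=\overline{\mathcal M(j(X))}$, where $\mathcal M$ is induced by maps $M_i(x)=a_i x b_i$ with $a_i,b_i$ in the unit sphere of $S_2(H)$. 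The first step is therefore to analyze the multiplication map $\mathcal M$: writing $S_1(H)=S_2(H)\cdot S_2(H)$, the crucial observation is that the map $x\mapsto a_i x b_i$ into $S_1(H)$ naturally factors through a Hilbertian space. Indeed $\|a_i x b_i\|_{S_1}\le \min\{\|a_i x\|_{S_2},\|x b_i\|_{S_2}\}\cdot\text{(the other factor's norm)}$, and more to the point the two maps $x\mapsto a_i x$ and $x\mapsto x b_i$ land in $S_2(H)\cong OH$-type spaces; combining them should give that $\mathcal M$ factors completely boundedly through $R_{\hat H}\cap C_{\hat H}$ for a suitable Hilbert space $\hat H$, using Remark~\ref{remark_ultraproducts} to pass the $R\cap C$ structure through the ultraproduct.

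The second step is to make this precise at the level of the $(2,RC)$-summing norm rather than the cb-norm, since that is the quantity that interacts cleanly with weights. Concretely, I would show: if $T:X\to A^*$ has $\pi_{1,cb}(T)<\infty$, then $T$ factors through a Hilbert space $\He$ with column/row structure as $T=T_2^*\circ T_1$ where $T_1:X\to \He$ is $(2,RC)$-summing and $T_2: A\to \He^*$ is $(2,RC)$-summing (equivalently, controlling the relevant weights). This is where the duality theorem for gamma-norms (Theorem~\ref{Thm_gammas}) enters: the bilinear form on $X\otimes A$ associated to $T$ has $\gamma^*$-norm controlled by $\pi_{1,cb}(T)$ with respect to the weight $w_1$ on $(X\otimes\overline X)_+$ coming from the $R\cap C$ structure (as in the Remark after Theorem~\ref{Thm_gammas}) and a corresponding weight $w_2$ on $(A\otimes\overline A)_+$; then Theorem~\ref{Thm_gammas} produces exactly the two $(2,w_i)$-summing factors $v_1,v_2$ with $\pi_{2,w_1}(v_1)\pi_{2,w_2}(v_2^*)\le \gamma^*(V)$. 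The content of the first step is precisely that the weight $w_2$ arising from $\mathcal M$ on the C$^*$-algebra side is (up to a universal constant) dominated by the $R\cap C$ weight, so that $\pi_{2,w_2}(v_2^*)$ dominates $\pi_{2,RC}(v_2^*)$; the factor $2$ flagged in the Remark before Section~\ref{Sec: Main result} appears here.

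The third step closes the argument. On the $A$-side, $v_2: A\to \He^*$ with $\pi_{2,RC}(v_2)<\infty$ yields, by Corollary~\ref{cor 2} (the extension property of $(2,RC)$-summing maps combined with Corollary~\ref{key cor}), that $v_2: A\to R_{\He^*}+C_{\He^*}$ is completely bounded with cb-norm at most $2\pi_{2,RC}(v_2)$. Dually, $v_1: X\to \He$ being $(2,RC)$-summing means precisely $\|id\otimes v_1:(R\cap C)\otimes_{min}X\to\ell_2(\He)\|<\infty$, i.e.\ $v_1:X\to R_{\He}\cap C_{\He}$ is completely bounded. Since $R_{\He^*}+C_{\He^*}=(R_{\He}\cap C_{\He})^*$ completely isometrically, composing $v_1$ with $v_2^*$ gives $\|T\|_{cb}=\|v_2^*\circ v_1\|_{cb}\le \|v_1\|_{cb}\,\|v_2^*\|_{cb}\le \pi_{2,RC}(v_1)\cdot 2\pi_{2,RC}(v_2)\le 2K''\pi_{1,cb}(T)$ for the universal constant produced by the gamma-duality and weight-comparison steps. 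I expect the main obstacle to be the second step: identifying the right weight $w_2$ on the C$^*$-algebra side induced by the $S_1$-valued multiplication factorization and proving the one-sided domination $w_2\lesssim w_{R\cap C}$ (equivalently, verifying that $\pi_{1,cb}$ controls the $\gamma^*$ of the associated bilinear form for the $R\cap C$ weights), since this is exactly the place where one must exploit both the operator-space structure on $X$ via the $\min$ tensor product defining $\pi_{1,cb}$ and the $S_2\cdot S_2$ factorization of $S_1$, and it is presumably responsible for the fact that the method forces $K>1$.
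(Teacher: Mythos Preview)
Your Step~3 contains a genuine error that breaks the argument: the claim that ``$v_1:X\to\He$ being $(2,RC)$-summing means $v_1:X\to R_\He\cap C_\He$ is completely bounded'' is false. The quantity $\pi_{2,RC}(v_1)=\|id\otimes v_1:(R\cap C)\otimes_{min}X\to\ell_2(\He)\|$ only controls finite \emph{sequences} in $X$, not matrices over $X$; what Corollary~\ref{cor 2} actually delivers is that $v_1$ is cb into $R_\He+C_\He$, the \emph{larger} structure. After your Step~3 you are therefore left with $v_1:X\to R+C$ cb and $v_2^*:R\cap C\to A^*$ cb, and these do not compose to a cb estimate for $T=v_2^*\circ v_1$: the middle operator-space structures do not match, and the identity $R+C\to R\cap C$ is not completely bounded. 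Your Step~2 is never justified either---you assert that $\gamma^*(T)\lesssim\pi_{1,cb}(T)$ for the $R\cap C$ weight on both sides, but give no mechanism connecting the $(1,cb)$-factorization to that dual bound.

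The paper's route differs in two essential ways. First, Theorem~\ref{Thm_gammas} is applied not to $T$ but to the multiplication map $M_{a,b}:B(H)\to S_1(H)$, and in the \emph{opposite} direction: one shows that every $V:S_1\to B(H)$ of the form $v_2^*\circ v_1$ with $\pi_{2,w_1}(v_1),\,\pi_{2,w_2}(v_2)\le 1$ satisfies $\|V\|_{cb}\le 4$; since $\hat M_{a,b}\in B_{S_1\hat\otimes S_1}$, duality then forces $\Gamma_{R\cap C}(M_{a,b})\le 4\sqrt 2$ (Proposition~\ref{prop. Mab}). Second, the two weights are \emph{asymmetric}: $w_1$ is the $R\cap C$ weight but $w_2$ is the $R+_2C$ weight of Lemma~\ref{Lemma_weight_R+C}. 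Because both $v_1,v_2$ now have domain $S_1$, the dual of a C$^*$-algebra, Corollary~\ref{cor 2} makes $v_1$ cb into $R+C$ while the new Lemma~\ref{cor 3} (which crucially uses that the domain is an $A^*$, via Lemma~\ref{lemma: 2-sum}) makes the $(2,R+C)$-summing $v_2$ cb into $R\cap C$---so the middle structures \emph{do} match. Once $\Gamma_{R\cap C}(\mathcal M)$ is controlled (passing through the ultraproduct via Remark~\ref{remark_ultraproducts}), the $(1,cb)$-factorization gives $T=(u\circ\tilde\beta)\circ(\tilde\alpha\circ j)$ with $\tilde\alpha\circ j:X\to R\cap C$ already cb, and the merely bounded $u\circ\tilde\beta:\tilde H\to A^*$ is upgraded to a cb map $R\cap C\to A^*$ via the dual of Corollary~\ref{key cor}; this last step, not a second summing estimate, is where the hypothesis that the range is $A^*$ finally enters.
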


Before proving the result, let us make some comments.

It follows from the proof of Theorem \ref{Main Thm} that the constant $K$ can be taken equal $8\sqrt{2}$. We did not attempt any optimization in terms of this constant. However, our proof inevitably leads to a constant strictly larger than one. Whether one can get $K=1$ in the previous statement seems an interesting problem (See Section \ref{Sec:QXOR} for a related problem in quantum information).

The fact that the image of $T$ is  in the dual of a C$^*$-algebra is crucial in Theorem \ref{Main Thm}, since one can find examples of (operator) spaces $X$, $Y$ for which $\|T:X\rightarrow Y\|_{cb}$ can be arbitrary larger than $\pi_{1,cb}(T:X\rightarrow Y)$. Indeed, this can be shown, for instance, by considering $X=CL_n$, the operator spaces associate to Clifford algebras  with $n$ generators \cite[Section 9.3]{Pisierbook}, and $Y=\max(\ell_2^n)$. With this choice, we have $\pi_{1,cb}(id:CL_n\rightarrow \ell_2^n)\leq 2$  \cite[Proposition 4.3.2]{JungeHab} and $\|id:CL_n\rightarrow \max(\ell_2^n)\|_{cb}\geq \sqrt{n}$ \cite[Theorem 10.4]{Pisierbook}.

Finally, recall that while it is known \cite[Corollary 5.5]{Pisier98} that  $$\pi_1^0(T)=\|id\otimes T:S_1\otimes_{min} X\rightarrow S_1(Y)\|=\|id\otimes T:S_1\otimes_{min} X\rightarrow S_1(Y)\|_{cb},$$$\pi_{1,cb}(T)=\|id\otimes T:\ell_1\otimes_{min} X\rightarrow \ell_1(Y)\|$ does not coincide in general  with $\|id\otimes T:\ell_1\otimes_{min} X\rightarrow \ell_1(Y)\|_{cb}$. Indeed, it follows from \cite{JuPa16} that $\|id\otimes T:\ell_1\otimes_{min} X\rightarrow \ell_1(X)\|_{cb}=\pi_1^0(T)$ and there are known examples showing that $\pi_1^0(T)$ can be much larger than $\pi_{1,cb}(T)$ for maps $T:B(H)\rightarrow S_1(H)$.

In order to prove Theorem \ref{Main Thm} we will need to introduce a new weight. To this end, let us first define, for any element $u\in \ell_2\otimes X$, the quantity 
\begin{align*}
\| u\|_{(R +_2 C) \otimes_{min} X} := \inf \left( \| T \|_{R \otimes_{min} X }^2  + \| S \|_  {C \otimes_{min} X}^2 \right)^{1/2},
\end{align*}where the infimum is taken  over $T,\,  S \in \ell_2 \otimes X $  such that $u = T + S$. Now, due to the homogeneity of $R$ and $C$, it is very easy to see that for any bounded operator $a:\ell_2\rightarrow \ell_2$, we have
\begin{align}\label{homog R+_2 C}
\|(a\otimes id)(u)\|_{(R +_2 C) \otimes_{min} X}\leq \|a\|\| u\|_{(R +_2 C) \otimes_{min} X}.
\end{align}

Indeed, given such $u$ and $a$, we have
\begin{align*}
\|(a\otimes id)(u)\|_{(R +_2 C) \otimes_{min} X} \le  \inf_{T,\, S \, : \, u = T+S}   \left( \| (a \otimes \mathrm{id}) (T) \|_{ R \otimes_{min} X }^2   +   \| (a \otimes \mathrm{id}) ( S )\|_{ C \otimes_{min} X }^2 \right)^{1/2},
\end{align*}
where the inequality follows from the fact that $(a \otimes \mathrm{id}) (T) +(a \otimes \mathrm{id} )(S) = (a \otimes \mathrm{id} )(T+S) = (a \otimes \mathrm{id}) (u) $. Then, using that $R$ and $C$ are homogeneous operator spaces, the completely bounded norm of $a$ coincides with its norm when viewed as an operator $a:R \rightarrow R $ and $a:C \rightarrow C$. Hence, in the previous expression, $ \|(a \otimes \mathrm{id})(T) \|_{R \otimes_{min} X} \le \|a\|\| T \|_{ R \otimes_{min}X}$ and $\|(a \otimes \mathrm{id})(S)\|_{C \otimes_{min} X }  \le \|a\| \|S\|_{ C \otimes_{min} X}$. This straightforwardly implies \eqref{homog R+_2 C}.

Although the following lemma was essentially proved in \cite[Lemma 4.2.1]{JungeHab} (for a different  definition of weight) we add it here for completeness.
\begin{lemma}\label{Lemma_weight_R+C}
	For any operator space $X$, there exists a weight $w$ on $(X \otimes \overline{X} )_+$ such that for any $x_1, \ldots, x_n \in X$,
\begin{align}\label{inequality weight}
\frac{1}{2} \left\| \sum_{k=1}^n e_k \otimes x_k \right\|^2_{(R+C)\otimes_{min} X} \le w \left(\sum_{k=1}^n x_k \otimes \overline{x}_k \right) \le \left\| \sum_{k=1}^n e_k \otimes x_k \right\|^2_{(R+C)\otimes_{min} X} .
\end{align}
\end{lemma}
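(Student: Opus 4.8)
The plan is to define the weight $w$ directly as the largest monotone, subadditive, positively homogeneous function dominated by the quadratic quantity $\|\sum_k e_k \otimes x_k\|^2_{(R+C)\otimes_{min} X}$, and then to establish the lower bound in \eqref{inequality weight} separately. More precisely, for $u = \sum_{k=1}^n x_k \otimes \overline x_k \in (X\otimes\overline X)_+$ I would set
$$
w(u) := \inf\left\{ \sum_{j=1}^m \left\| \sum_{k} e_k \otimes y_k^{(j)} \right\|^2_{(R+C)\otimes_{min} X} \ : \ u \le \sum_{j=1}^m \sum_k y_k^{(j)} \otimes \overline{y_k^{(j)}} \right\},
$$
the infimum over all finite decompositions whose sum dominates $u$ in the cone order. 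This construction automatically guarantees the three weight axioms: positive homogeneity and subadditivity are immediate from the form of the definition, and monotonicity follows because if $u \le u'$ then any family dominating $u'$ also dominates $u$. The upper bound in \eqref{inequality weight} is then also immediate, taking the trivial decomposition $m=1$, $y_k^{(1)} = x_k$.

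The real content is the lower bound: I must show $w(\sum_k x_k\otimes\overline x_k) \ge \tfrac12 \|\sum_k e_k\otimes x_k\|^2_{(R+C)\otimes_{min} X}$, i.e. that the functional $u\mapsto \|\sum_k e_k\otimes x_k\|^2$ cannot drop by more than a factor $2$ under the "convexification/monotonization" above. This is where Proposition \ref{Prop Order} enters. Given a decomposition $u \le \sum_j \sum_k y_k^{(j)}\otimes\overline{y_k^{(j)}}$, I would first collapse the index $j$ by concatenating the families $(y^{(j)}_k)_k$ into a single family $(z_\ell)_\ell$; since $R+C$ (equivalently $R+_2 C$ up to the constant $2$) behaves well under such "diagonal" concatenation — the norm of a direct sum of blocks being controlled by the $\ell_2$-sum of block norms via the homogeneity property \eqref{homog R+_2 C} — one gets $\|\sum_\ell e_\ell\otimes z_\ell\|^2_{(R+C)\otimes_{min}X} \le \sum_j \|\sum_k e_k\otimes y_k^{(j)}\|^2_{(R+C)\otimes_{min}X}$ (possibly with the factor $2$ appearing here, which is the source of the $\tfrac12$). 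Then $\sum_k x_k\otimes\overline x_k \le \sum_\ell z_\ell\otimes\overline{z_\ell}$, and Proposition \ref{Prop Order} yields a contraction $a$ with $(a\otimes\mathrm{id})(\sum_\ell e_\ell\otimes z_\ell) = \sum_k e_k\otimes x_k$. Applying the homogeneity estimate \eqref{homog R+_2 C} for the $(R+_2 C)\otimes_{min} X$ quasi-norm (and the standard fact that $\|\cdot\|_{(R+C)\otimes_{min}X}$ and $\|\cdot\|_{(R+_2 C)\otimes_{min}X}$ are equivalent within a factor $\sqrt 2$, coming from $\|r\|^2 + \|c\|^2 \le (\|r\|+\|c\|)^2 \le 2(\|r\|^2+\|c\|^2)$), I conclude
$$
\left\| \sum_k e_k\otimes x_k \right\|^2_{(R+C)\otimes_{min}X} \le 2\, \|a\|^2 \left\| \sum_\ell e_\ell\otimes z_\ell \right\|^2_{(R+_2 C)\otimes_{min}X} \le 2 \sum_j \left\| \sum_k e_k\otimes y_k^{(j)} \right\|^2_{(R+C)\otimes_{min}X}.
$$
Taking the infimum over decompositions gives the lower bound.

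**Main obstacle.** I expect the delicate point to be the bookkeeping in the concatenation step: verifying that the $(R+_2 C)\otimes_{min}X$ quantity of a concatenated family is bounded by the $\ell_2$-sum of the pieces. The natural argument is to choose near-optimal splittings $y^{(j)}_k = t^{(j)}_k + s^{(j)}_k$ of each block into a row part and a column part, concatenate all the $t$'s into one row element and all the $s$'s into one column element, and use that an $\ell_2$-indexed direct sum sitting inside $R$ (resp. $C$) has norm exactly the $\ell_2$-aggregate — this is where one must be careful that $R\otimes_{min}X$ (resp. $C\otimes_{min}X$) genuinely respects orthogonal direct sums of the Hilbert index, which it does because $R$ and $C$ are homogeneous Hilbertian. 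Keeping track of exactly where the factor $2$ is spent (here in passing between $R+C$ and $R+_2C$, as the excerpt's remark about "a multiplicative factor of $2$" anticipates) is the part requiring genuine care; everything else is formal.
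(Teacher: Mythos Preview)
Your approach is correct but takes a different route from the paper. The paper defines the weight \emph{directly} as
\[
w\Big(\sum_k x_k\otimes\overline{x}_k\Big) \;=\; \Big\|\sum_k e_k\otimes x_k\Big\|^2_{(R+_2 C)\otimes_{min} X},
\]
and then verifies the weight axioms one by one: well-definedness and monotonicity both come from Proposition~\ref{Prop Order} combined with the homogeneity estimate \eqref{homog R+_2 C}, while subadditivity is the $2$-convexity computation (your ``concatenation step'', which is exactly the inequality marked $(*)$ in the paper's proof). The two-sided estimate \eqref{inequality weight} is then immediate from the elementary comparison $\|\cdot\|_{(R+_2 C)}\le\|\cdot\|_{(R+C)}\le\sqrt{2}\,\|\cdot\|_{(R+_2 C)}$.

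You instead build $w$ as the maximal weight dominated by $\|\cdot\|^2_{(R+C)\otimes_{min}X}$ via an infimum over dominating decompositions, so the weight axioms and the upper bound come for free and all the work is pushed into the lower bound. But notice that your lower-bound argument passes through the $(R+_2 C)$ quantity and uses precisely the same three ingredients (Proposition~\ref{Prop Order}, estimate \eqref{homog R+_2 C}, and $2$-convexity of orthogonal concatenation); in effect you are re-proving that the paper's explicit $w$ is a weight in order to show it minorises yours. So the paper's route is more economical --- it goes straight to the closed formula and skips the universal construction --- while your approach has the conceptual merit of explaining \emph{why} such a weight must exist without first having to guess the $(R+_2 C)$ expression.
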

\begin{proof}
	
	We explicitly define the alluded weight. For any $x_1,\ldots , x_n \in X$,
	\begin{equation}\label{def_w}
	w\left(\sum_{k=1}^n x_k \otimes \overline{x}_k \right)  :=  \left\|  \sum_{k=1}^n e_k \otimes x_k \right\|^2_{(R +_2 C) \otimes_{min} X},
	\end{equation}
	where here $\lbrace e_k \rbrace_k$ is an orthonormal basis of $ \ell_2$.
	
	First of all, note that Equation (\ref{homog R+_2 C}) guarantees that the quantity $\left\|  \sum_{k=1}^n e_k \otimes x_k \right\|^2_{(R +_2 C) \otimes_{min} X}$ does not depend on the chosen orthonormal basis.
	
	In order to see that $w$ is well defined, consider two possible representations of an element in $(X \otimes \overline X )_+ $. That is, let $x=\sum_{i=1}^nx_i\otimes \overline{x}_i$ and $y=\sum_{j=1}^my_j\otimes \overline{y}_j$ such that $x=y$. Now, using that this implies both inequalities $x\leq y$ and $y\leq x$, one can easily deduce that $$\left\|  \sum_{i=1}^n e_i \otimes x_i \right\|_{(R +_2 C) \otimes_{min} X}=\left\|  \sum_{j=1}^m e_j \otimes y_j \right\|_{(R +_2 C) \otimes_{min} X}$$ from Proposition \ref{Prop Order} and Equation (\ref{homog R+_2 C}).

Note also that Equation (\ref{inequality weight}) is automatically satisfied by the definition of $w$ and the definition of the norm $\| x \|_{(R +C) \otimes_{min} X}$. Hence, we just need to prove that $w$ is indeed a weight. For that, we check that $w$ satisfies the conditions in Definition \ref{Def_weight}. 
	
It is clear that $\omega(x)\geq 0$ for every $x\in (X \otimes \overline{X} )_+$. In fact it is also very easy to check that $w$ is positively homogeneous. Next, let us verify the subadditivity. Consider $ \sum_{i=1}^n x_i \otimes \overline x_i, \, \sum_{j=1}^m  y_j \otimes \overline y_j \in (X \otimes \overline{X})_+$ and let us denote $x = \sum_{i=1}^n e_i \otimes x_i, \, y = \sum_{j=1}^m  e_{n+j} \otimes y_j \in \ell_2\otimes X$. Then we have that:
		\begin{align*}
		w\left( \sum_{i=1}^n x_i \otimes \overline x_i  +  \sum_{j=1}^m  y_j \otimes \overline y_j  \right) &= \big\| x    +   y   \big \|_{ (R +_2 C) \otimes_{min} X }^2  \\
		& = \inf_{T,\, S \, : \, x+y = T+S}    \| T \|_{ R \otimes_{min} X }^2   +   \| S \|_{ C \otimes_{min} X }^2  \\
		&\le \inf_{   \begin{subarray}{c} T_x,\, S_x \, : \, x = T_x + S_x ,\\ T_y,\, S_y \, : \, y = T_y + S_y \end{subarray}  }
		\| T_x + T_y \|_{R \otimes_{min} X }^2    +   \| S_x + S_y \|_{C \otimes_{min} X}^2 	\\
		& \stackrel{(*)}{ \le }    \inf_{   T_x,\, S_x \, : \, x = T_x + S_x   }
		  \| T_x \|_{R \otimes_{min} X }^2    +   \| S_x  \|_{C \otimes_{min} X}^2 
		\\ & \quad +  \inf_{   T_y,\, S_y \, : \, y = T_y + S_y  }   \| T_y \|_{R \otimes_{min} X }^2 + \| S_y \|_{C \otimes_{min} X}^2 \\
		& = 	w\left(\sum_{k=1}^n x_k \otimes \overline{x}_k \right) + 	w\left(  \sum_{j=1}^m  y_j \otimes \overline y_j  \right).	
		\end{align*}
	The inequality $(*)$ follows straightforwardly from the definition of the norms\footnote{ Using Pisier's nomenclature, cf. \cite[\S 4]{PisierOH}, these norms are 2-convex.} $ \| x \|_{ R \otimes_{min} X}$, $  \| x \|_{ C \otimes_{min} X}$.

	Finally, the monotonicity of $w$ follows easily from Proposition \ref{Prop Order} and Equation (\ref{homog R+_2 C}).
	
\end{proof}

The previous result allows us to obtain the following corollary of Theorem \ref{Thm_gammas}:
\begin{corollary}\label{cor-thm-Pisier}
Let $X$ and $Y$ be operator spaces and let $\gamma_{R\cap C}$ the semi-norm on $X\otimes Y$ defined in Theorem \ref{Thm_gammas} by the weights 
$$
w_1 \left(  \sum_i x_i \otimes \overline{x}_i \right)  =  \Big\| \sum_i e_i \otimes x_i   \Big \|_{(R\cap C) \otimes_{min} X}^2 \quad  \text{ on $(X\otimes \overline X)_+$,}
$$
 and
$$
w_2 \left( \sum_j y_j \otimes \overline{y}_j \right) =  \Big\| \sum_j e_j \otimes y_j   \Big \|_{( R +_2 C )\otimes_{min} Y}^2 \quad \text{ on $(Y\otimes \overline Y)_+$.}
$$ 

Then, if we consider the natural algebraic inclusion $X\otimes Y\iny L(X^*, Y)$ such that $z\mapsto T_z$, the following estimate holds:
\begin{align}\label{duality estimate}
\gamma_{R\cap C}(z)\leq \Gamma_{R\cap C}(T_z)\leq \sqrt{2}\, \gamma_{R\cap C}(z), 
\end{align}where $$\Gamma_{R\cap C}(T_z):=\inf\{\|a:X^*\rightarrow R\cap C\|_{cb}\|b:R\cap C\rightarrow Y\|_{cb}: T_z=b\circ a\}.$$

Moreover, given $z\in X\otimes Y$ and a constant $K$, we have that $\gamma_{R\cap C}(z) \le K$ if and only if $|\langle z, V\rangle| \leq K $ for every $V\in (X\otimes Y)^*$ such that, as a linear map $V:X\rightarrow Y^*$, $V=v_2^*\circ v_1$ for certain operators $v_1,\, v_2$ verifying $\pi_{2,w_1}(v_1:X\rightarrow H)\leq 1 $ and $\pi_{2,w_2}(v_2:Y\rightarrow H^*)\leq 1$, being $H$ an arbitrary complex Hilbert space.
\end{corollary}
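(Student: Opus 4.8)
The plan is to separate the two assertions: the two-sided estimate \eqref{duality estimate} is the substantive one, while the ``moreover'' part will be a formal consequence of Theorem~\ref{Thm_gammas}. Since $z$ belongs to the algebraic tensor product $X\otimes Y$, every object below lives inside a finite-dimensional subspace and no convergence issue arises. The engine is the standard dictionary between gamma-norms and factorisations, resting on three elementary facts: for a finite-dimensional operator space $E$ the natural map $E\otimes_{min}X\to CB(E^*,X)$ is a complete isometry (and likewise with $Y$); $\|u\|_{cb}=\|u^*\|_{cb}$ for maps between finite-dimensional operator spaces; and $(R\cap C)_n^*=(R+C)_n$ completely isometrically. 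Concretely, to a tuple $x_1,\dots,x_n\in X$ I will attach the map $a:X^*\to(R\cap C)_n$, $a(\xi)=\sum_i\xi(x_i)e_i$; its pre-adjoint $w:(R+C)_n\to X$, $w(\phi)=\sum_i\langle\phi,e_i\rangle x_i$, corresponds under the identifications above to $\sum_i e_i\otimes x_i\in(R\cap C)_n\otimes_{min}X$, so that
\[
\|a:X^*\to R\cap C\|_{cb}=\|w\|_{cb}=\Big\|\sum_i e_i\otimes x_i\Big\|_{(R\cap C)\otimes_{min}X}=w_1\Big(\sum_i x_i\otimes\overline{x}_i\Big)^{1/2}.
\]
Dually, to a tuple $y_1,\dots,y_n\in Y$ I will attach $b:(R\cap C)_n\to Y$, $b(e_i)=y_i$; then $\|b\|_{cb}=\|\sum_i e_i\otimes y_i\|_{(R+C)\otimes_{min}Y}$, and Lemma~\ref{Lemma_weight_R+C} gives
\[
w_2\Big(\sum_i y_i\otimes\overline{y}_i\Big)^{1/2}\ \le\ \|b\|_{cb}\ \le\ \sqrt2\;w_2\Big(\sum_i y_i\otimes\overline{y}_i\Big)^{1/2}.
\]

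With this dictionary, $\Gamma_{R\cap C}(T_z)\le\sqrt2\,\gamma_{R\cap C}(z)$ is immediate: for any representation $z=\sum_i x_i\otimes y_i$ the maps $a,b$ above satisfy $T_z=b\circ a$, hence $\Gamma_{R\cap C}(T_z)\le\|a\|_{cb}\|b\|_{cb}\le\sqrt2\;w_1(\sum_i x_i\otimes\overline{x}_i)^{1/2}w_2(\sum_i y_i\otimes\overline{y}_i)^{1/2}$, and taking the infimum over representations gives the bound. For $\gamma_{R\cap C}(z)\le\Gamma_{R\cap C}(T_z)$ I run the dictionary backwards, starting from an arbitrary factorisation $T_z=b\circ a$ with $a:X^*\to R\cap C$ and $b:R\cap C\to Y$ completely bounded, and aim to produce a representation $z=\sum_j x_j\otimes y_j$ with $y_j=b(e_j)$ and $a(\xi)=\sum_j\xi(x_j)e_j$, $x_j\in X$; the dictionary then yields $w_1(\sum_j x_j\otimes\overline{x}_j)^{1/2}\le\|a\|_{cb}$ and $w_2(\sum_j y_j\otimes\overline{y}_j)^{1/2}\le\|b\|_{cb}$, and the infimum over factorisations finishes it. The one genuine obstacle — which I expect to be the only point requiring care — is that $a$ need not be weak-$*$ continuous, so a priori $x_j\in X^{**}$ only. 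I would remove it as follows: $\ker a\subseteq\ker T_z$ has finite codimension, so $a$ has finite-dimensional range; replacing $R\cap C$ first by that range and then by the orthogonal complement of $\ker b$ inside it — both completely isometric to some $(R\cap C)_m$ since $R\cap C$ is homogeneous, and the orthogonal projection of $R\cap C$ onto such a subspace being a complete contraction — one may assume $b$ injective, whereupon $a=b^{-1}\circ T_z$ inherits the weak-$*$ continuity of $T_z$ and the $x_j$ land in $X$. The only constant $>1$ that enters the whole argument is the factor $2$ in Lemma~\ref{Lemma_weight_R+C} on the $Y$-side, and that is precisely what produces the $\sqrt2$ in \eqref{duality estimate}.

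For the ``moreover'' statement I combine Theorem~\ref{Thm_gammas} with Hahn--Banach duality for the semi-norm $\gamma_{R\cap C}$: one has $\gamma_{R\cap C}(z)=\sup\{|\langle z,V\rangle|:\gamma^*(V)\le1\}$, where $\gamma^*$ denotes the dual norm. By Theorem~\ref{Thm_gammas}, $\gamma^*(V)\le1$ holds if and only if (normalising a near-optimal factorisation to have equal factors, which costs nothing since $\pi_{2,w_1}$, $\pi_{2,w_2}$ are homogeneous and $H$ is arbitrary) the operator $v:X\to Y^*$ associated to $V$ admits a factorisation $v=v_2^*\circ v_1$ with $\pi_{2,w_1}(v_1)\le1$ and $\pi_{2,w_2}(v_2)\le1$. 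The biconditional between ``$\gamma_{R\cap C}(z)\le K$'' and ``$|\langle z,V\rangle|\le K$ for every such $V$'' then follows from $|\langle z,V\rangle|\le\gamma^*(V)\gamma_{R\cap C}(z)$ together with the routine $\varepsilon$-rescaling $V\mapsto V/(1+\varepsilon)$ (using that the infimum defining $\gamma^*$ need not be attained).
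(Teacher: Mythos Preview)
Your approach is the same as the paper's: both reduce \eqref{duality estimate} to the identity
\[
\Gamma_{R\cap C}(T_z)=\inf_{z=\sum_i x_i\otimes y_i}\Big\|\sum_i e_i\otimes x_i\Big\|_{(R\cap C)\otimes_{min}X}\Big\|\sum_i e_i\otimes y_i\Big\|_{(R+C)\otimes_{min}Y}
\]
and then invoke Lemma~\ref{Lemma_weight_R+C}. The paper simply asserts this identity; you prove it via the factorisation/representation dictionary, which is welcome added detail. The ``moreover'' part is handled identically in both.

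There is, however, a genuine slip in your reduction for the inequality $\gamma_{R\cap C}(z)\le\Gamma_{R\cap C}(T_z)$. From $\ker a\subseteq\ker T_z$ and $\operatorname{codim}\ker T_z<\infty$ you \emph{cannot} deduce that $a$ has finite-dimensional range: the inclusion goes the wrong way for that. The remedy is simply to swap the order of your two reductions. First project onto $(\ker b)^\perp$, obtaining $a'=P\circ a$ and $b'=b|_{(\ker b)^\perp}$ with $b'$ injective and $\|a'\|_{cb}\le\|a\|_{cb}$, $\|b'\|_{cb}\le\|b\|_{cb}$ (homogeneity of $R\cap C$ gives $\|P\|_{cb}\le1$). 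Now $T_z=b'\circ a'$ with $b'$ injective forces $\ker a'=\ker T_z$, which \emph{does} have finite codimension, so $a'$ has finite rank; restricting to its (finite-dimensional) range, $b'$ remains injective and your argument $a'=(b')^{-1}\circ T_z$ goes through to yield weak-$*$ continuity of $a'$, hence $x_j\in X$. With this correction the proof is complete and matches the paper's.
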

\begin{proof}
First note that $w_1$ and $w_2$ are in fact weights. In the first case, this follows from \cite[Prop. 4.7]{PisierOH} and in the second case it follows from Lemma \ref{Lemma_weight_R+C}. Then, the definition of $\gamma_{R\cap C}$ associated to these weights, provided by Theorem \ref{Thm_gammas}, reads:
$$\gamma_{R\cap C}(z) = \inf_{z=\sum_i x_i \otimes y_i} \big\| \sum_i e_i \otimes x_i   \big\|_{ (R\cap C) \otimes_{min} X } \, \big\| \sum_j e_j \otimes y_j    \big\|_{ ( R +_2 C )\otimes_{min} Y }.$$

Now the estimate (\ref{duality estimate}) follows easily from Lemma  \ref{Lemma_weight_R+C} by just noticing that the norm $\Gamma_{R\cap C}$ can be equivalently written as $$\Gamma_{R\cap C}(T_z)=\inf_{z = \sum_i x_i \otimes y_i} \big\| \sum_i e_i \otimes x_i   \big \|_{ (R\cap C) \otimes_{min} X } \, \big\| \sum_j e_j \otimes y_j   \big \|_{ ( R + C )\otimes_{min} Y }.$$

The moreover part is a direct consequence of the second part of Theorem \ref{Thm_gammas}.
\end{proof}

The previous corollary allows us to prove the following key proposition.
\begin{prop}\label{prop. Mab}
Let $H$ be a complex Hilbert space, $a$ and $b$ be elements in the unit ball of  $S_2(H)$ and $M_{a,b}:B(H)\rightarrow S_1(H)$ be the linear map defined as $M_{a,b}(x)=axb$ for every $x\in B(H)$. Then, $\Gamma_{R\cap C}(M_{a,b})\leq K$ for a universal constant $K$. Moreover, $K$ can be taken $4\sqrt{2}$.
\end{prop}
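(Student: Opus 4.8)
The plan is to bound $\Gamma_{R\cap C}(M_{a,b})$ by factoring $M_{a,b}$ through $R\cap C$ in a controlled way, and to package the estimate through the duality description of $\Gamma_{R\cap C}$ provided by Corollary \ref{cor-thm-Pisier}. First I would use the polar/singular value decomposition of $a$ and $b$ to reduce to the case where $a$ and $b$ are positive (replacing $B(H)$ by a corner and absorbing unitaries, which are complete isometries on $B(H)$ and isometries on $S_1(H)$, hence cost nothing). In the positive case, write $M_{a,b}(x)=a x b$ and factor it as $M_{a,b}=R_b\circ L_a$ where $L_a:B(H)\to S_2(H)$, $L_a(x)=ax$, lands in the column space, and $R_b:S_2(H)\to S_1(H)$, $R_b(y)=yb$. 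Since $\|a\|_{S_2},\|b\|_{S_2}\le 1$, each of these maps is a complete contraction into the appropriate row/column structure: $L_a$ is completely bounded from $B(H)$ into $C_H$ with cb-norm $\le\|a\|_{S_2}\le 1$ (this is the standard fact that left multiplication by a Hilbert--Schmidt operator is completely bounded into the column space), and symmetrically right multiplication by $b$ is a complete contraction from $R_H$ into $S_1(H)$. Combining a column factorization and a row factorization is precisely what $R\cap C$ is designed to handle.

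Concretely, I would produce two factorizations of $M_{a,b}$: one through $C_H$ and one through $R_H$. Writing $a=a^{1/2}a^{1/2}$ and $b=b^{1/2}b^{1/2}$ in the positive case, set $u=L_{a}: B(H)\to C_H$ and $v=R_{b}:C_H\to S_1(H)$; by the above, $\|u\|_{cb}\le 1$ and $\|v\|_{cb}\le 1$, giving a factorization through $C_H$ of cb-norm $\le 1$. Symmetrically one gets a factorization through $R_H$ of cb-norm $\le 1$, by using $x\mapsto xb$ into $R_H$ and $y\mapsto ay$ from $R_H$ to $S_1(H)$. Since $R\cap C$ sits (completely) between $R$ and $C$ via $j:R\cap C\to R\oplus_\infty C$, a map that factors with small cb-norm both through $R$ and through $C$ also factors through $R\cap C$; the standard interpolation/averaging argument turns a cb-norm-$1$ column factorization together with a cb-norm-$1$ row factorization into an $R\cap C$ factorization at the cost of a universal constant. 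Here is where the factor of $4\sqrt 2$ enters: one combines the two factorizations (cost at most $2$), the passage between $R+C$ and the auxiliary $R+_2 C$ norm used in Corollary \ref{cor-thm-Pisier} costs another $\sqrt 2$ (cf. Lemma \ref{Lemma_weight_R+C}), and a further factor of $2$ comes from the use of Corollary \ref{key cor}/Corollary \ref{cor 2} to identify the $(2,w_2)$-summing map with a cb map into $R_H+C_H$.

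Alternatively --- and this is probably the cleaner route to the constant --- I would work directly with the duality characterization at the end of Corollary \ref{cor-thm-Pisier}. Identifying $M_{a,b}\in B(H)\otimes S_1(H)$ with the element $z=\sum a e_{ij} b\otimes e_{ij}$ (for a matrix unit basis adapted to $a,b$), it suffices to bound $|\langle z,V\rangle|$ for every $V=v_2^*\circ v_1$ with $\pi_{2,w_1}(v_1)\le 1$ and $\pi_{2,w_2}(v_2)\le 1$. Using Corollary \ref{cor 2}, $\pi_{2,w_1}(v_1)\le 1$ forces $\|v_1:B(H)\to R_H+C_H\|_{cb}\le 2$, and similarly for $v_2$; then $\langle z,V\rangle$ is an explicit bilinear pairing of the form $\Tr(a\,v_1^*\,w\,v_2\,b)$-type expression, which one estimates by Cauchy--Schwarz using $\|a\|_{S_2},\|b\|_{S_2}\le 1$ together with the $R+C$ mapping bounds on $v_1,v_2$. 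The main obstacle I anticipate is bookkeeping the constant precisely: keeping track of the $R$ versus $C$ sides, the $R+C$ versus $R+_2 C$ discrepancy, and the factor-$2$ losses from the non-commutative little Grothendieck theorem, so that they multiply to exactly $4\sqrt 2$ and no worse. The conceptual content --- that left/right multiplication by Hilbert--Schmidt operators are complete contractions into column/row space --- is routine; it is the careful combination into an $R\cap C$ factorization, respecting the specific weights $w_1,w_2$ fixed in Corollary \ref{cor-thm-Pisier}, that requires care.
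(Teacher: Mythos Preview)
Your first approach contains a genuine gap. Having a cb-factorization of $M_{a,b}$ through $C_H$ and a \emph{separate} cb-factorization through $R_H$ does not yield a factorization through $R_H\cap C_H$: the quantity $\Gamma_{R\cap C}$ requires a \emph{single} intermediate map $\alpha$ that is simultaneously cb into $R_H$ and into $C_H$. Your two factorizations use different intermediate maps ($x\mapsto ax$ for the column side, $x\mapsto xb$ for the row side), and there is no ``standard interpolation/averaging'' procedure that merges two such factorizations into an $R\cap C$ factorization. This is precisely why the paper does not attempt the direct route.

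Your second approach is the correct strategy and is essentially what the paper does, but you are missing the two ingredients that make it go through. First, the pairing $|\langle \widehat{M}_{a,b},V\rangle|$ is not handled by an ad hoc Cauchy--Schwarz estimate: the key input (from Effros--Ruan) is that $\widehat{M}_{a,b}$ lies in the unit ball of the operator-space projective tensor product $S_1(H)\hat\otimes S_1(H)=S_1(H\otimes H)$, so it suffices to prove $\|V:S_1(H)\to B(H)\|_{cb}\le 4$ and use the duality $(S_1\hat\otimes S_1)^*=CB(S_1,B(H))$. Second, you cannot treat $v_1$ and $v_2$ ``similarly'': $v_1$ is $(2,RC)$-summing and Corollary~\ref{cor 2} gives $\|v_1:S_1(H)\to R+C\|_{cb}\le 2$, but $v_2$ is $(2,R+C)$-summing and what is needed is $\|v_2:S_1(H)\to R\cap C\|_{cb}\le 2$, equivalently $\|v_2^*:R+C\to B(H)\|_{cb}\le 2$. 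This asymmetric bound is the content of Lemma~\ref{cor 3}, which exploits that $S_1(H)$ is the dual of a C$^*$-algebra: one first gets $\pi_2(v_2)\le 2\pi_{2,R+C}(v_2)$ from the dual of Corollary~\ref{key cor}, then extends through a $C(K)$ by the $2$-summing extension property, and finally applies Lemma~\ref{lemma: 2-sum}. (Note also that the domain of $v_1,v_2$ is $S_1(H)$, not $B(H)$.) Combining the two bounds gives $\|V\|_{cb}\le 4$, hence $\gamma_{R\cap C}(\widehat{M}_{a,b})\le 4$, and the extra $\sqrt 2$ comes from Corollary~\ref{cor-thm-Pisier}.
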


For the sake of clarity, we isolate the following part of the proof as a lemma:
\begin{lemma}\label{cor 3}
	Let $A$ be a C$^*$-algebra and $H$ be a Hilbert space. Then, any $(2,R+C)$-summing map $T:A^*\rightarrow H$ verifies that $$\|T:A^*\rightarrow R_H \cap C_H\|_{cb}\leq 2\pi_{2,R+C}(T).$$
\end{lemma}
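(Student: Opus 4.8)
The statement is the dual counterpart of Corollary~\ref{cor 2}: there, $(2,RC)$-summing maps $T:X\to H$ were shown to factor completely boundedly through $R_H+C_H$; here we want $(2,R+C)$-summing maps out of $A^*$ to factor completely boundedly through $R_H\cap C_H$. The natural route is therefore to mimic the proof of Corollary~\ref{cor 2}, replacing the roles of $R+C$ and $R\cap C$ and using duality. First I would observe that the appropriate substitute for the extension property of $(2,RC)$-summing maps (coming from \cite[Proposition~0.4]{JuPi95}) should be available for $(2,R+C)$-summing maps out of an $A^*$: since $A^*$ is $1$-complemented in its bidual $A^{***}$ and $A^{***}$ is again (the dual of) a C$^*$-algebra, one reduces to maps defined on $A^{***}=(A^{**})^*$, i.e. on the dual of a von Neumann algebra; alternatively, one works directly with $T:A^*\to H$ and dualizes.

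**Main step: dualization.** The cleanest approach is to take adjoints. Given $T:A^*\to H$ with $\pi_{2,R+C}(T)<\infty$, consider $T^*:H^*\to A^{**}$. Using the identity $(R+C)^*=R\cap C$ completely isometrically (and $H^*\simeq H$ at the Banach level, with $R^*=C$, $C^*=R$), the condition that $T$ is $(2,R+C)$-summing should translate, via the duality theory for the $\pi_{2,w}$ norms and Theorem~\ref{Thm_gammas}, into a statement that $T^*$ is "$(2,R\cap C)$-summing into $A^{**}$" in the sense of an appropriate weight. Then I would invoke the noncommutative little Grothendieck machinery in the form already packaged in Corollary~\ref{key cor} / Corollary~\ref{cor 2}: a bounded (or $2$-summing-type) map from a C$^*$-algebra, or into the dual of one, splits into a row part and a column part, and assembling the factorization on the dual side and taking adjoints back gives $\|T:A^*\to R_H\cap C_H\|_{cb}\le 2\,\pi_{2,R+C}(T)$, the factor $2$ coming exactly from the $T=T_1+T_2$ decomposition as in Corollary~\ref{key cor}.

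**Concretely**, here is the order of operations I would carry out: (1) reduce to $T$ defined on the dual of a von Neumann algebra $M$ by going to the bidual and using $1$-complementation (so no loss in the constant); (2) take the adjoint $S=T^*:H^*\to M_*^{\ *}=M$, or better work with $T:M_*\to H$ and its adjoint into $M$, and identify $(2,R+C)$-summing for $T$ with the corresponding summing property of the adjoint relative to the weight dual to the $R+C$ weight, using Remark~\ref{remark_ultraproducts} (stability of $R\cap C$ under ultraproducts) to handle the factorization-theorem picture; (3) apply the $R+C$ decomposition (Corollary~\ref{key cor}) to the adjoint to split it as a sum of a map with small $cb$-norm into $R_H$ and one into $C_H$; (4) dualize back: the dual of "maps into $R_H\oplus_\infty C_H$" is "maps out of $R_H\cap C_H$", giving the claimed complete boundedness; (5) track constants to confirm the factor $2$.

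**The main obstacle** will be step (2)–(4): making the duality between the $(2,R+C)$-summing condition on $T:A^*\to H$ and a clean Grothendieck-type statement on the adjoint side fully rigorous, in particular handling the fact that $A^*$ is not itself a C$^*$-algebra so one cannot apply Theorem~\ref{NClittleGrothendieck} to $T$ directly. The fix — passing to $A^{**}$ and using that $A^{***}$ contains $A^*$ $1$-complementedly so that one may instead extend and then factor on the von Neumann algebra side, finally restricting — is standard but needs to be spelled out carefully so that the constant genuinely stays at $2$. The $cb$-norm bookkeeping (using homogeneity of $R$, $C$, $R\cap C$, and that adjoints of $cb$ maps are $cb$ with the same norm) is routine once the framework is set up.
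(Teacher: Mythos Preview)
Your dualization plan has a real directional problem at step~(3). Corollary~\ref{key cor} applies to maps \emph{from} a C$^*$-algebra \emph{into} a Hilbert space, producing a splitting $T_1+T_2$ with cb-control into $R_H$ and $C_H$. The adjoint $T^*:H^*\to A^{**}$ goes the opposite way (Hilbert space into a C$^*$-algebra), so Corollary~\ref{key cor} simply does not apply to it; there is no ``$R+C$ decomposition'' available for $T^*$ in this form. Your step~(2) is also too vague to rescue this: there is no clean trace-duality that converts $\pi_{2,R+C}(T)$ into a $(2,R\cap C)$-summing condition on $T^*$ in the way you need, and the weight machinery of Theorem~\ref{Thm_gammas} gives factorizations of bilinear forms, not of adjoint maps into C$^*$-algebras. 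The bidual-and-complementation maneuver in step~(1) is fine but orthogonal to the real difficulty.

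The paper's argument avoids dualization entirely and is much more elementary. The key observation you are missing is that Corollary~\ref{key cor}, read as the estimate $\|id:\ell_2\otimes_\epsilon A^*\to (R+C)\otimes_{min}A^*\|\le 2$, immediately yields $\pi_2(T)\le 2\,\pi_{2,R+C}(T)$ for any $T:A^*\to H$. Once $T$ is classically $2$-summing, the standard factorization theorem gives $T=\tilde T\circ\iota$ with $\iota:A^*\hookrightarrow C(K)$ the canonical isometric embedding and $\pi_2(\tilde T)=\pi_2(T)$. Now two facts finish the job: $\iota$ is automatically a complete contraction because $C(K)$ is a commutative C$^*$-algebra (so carries the $\min$ structure), and Lemma~\ref{lemma: 2-sum} says $\|\tilde T:C(K)\to R_H\cap C_H\|_{cb}=\pi_2(\tilde T)$. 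Composing gives the bound with constant $2$. The moral is that the route into $R_H\cap C_H$ goes through \emph{commutative} C$^*$-algebras and classical $\pi_2$, not through the noncommutative adjoint.
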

\begin{proof}
	Let us first note that Corollary \ref{key cor} allows us to obtain
	$ \pi_2(T) \le 2 \pi_{2,R+C} (T).$
	Indeed, to show that it is enough to note that  Corollary \ref{key cor} can be reinterpreted as the estimate $\| id : \tilde H \otimes_\varepsilon A^* \rightarrow R_{\tilde H} + C_{\tilde H} \otimes_{min} A^* \| \le 2$, being $\tilde H$ any Hilbert space. The desired inequality between 2-summing norms is obtained by choosing $\tilde H = \ell_2$ and recalling Equations \eqref{Equation p-summing} and \eqref{Def:pi_2,R+C}.

	With this at hand, we can invoke the extension property of 2-summing maps explained in Section 2.1. That is, the fact that  $T$ is 2-summing implies the existence of a map $\tilde T : C(K) \rightarrow H$ such that $\pi_2(\tilde T) = \pi_2 (T) \le 2 \pi_{2, R+C} (T)$ and $T = \tilde T \circ \iota $, where $\iota : A^* \rightarrow C(K) $ is the canonical embedding for a suitable compact space $K$. Lemma \ref{lemma: 2-sum} promotes $\tilde T$ to be also completely bounded when the operator space structure $R_H \cap C_H$ is considered in the image space.
	
	Finally, since $\| \iota : A^* \rightarrow C(K)\|_{cb} = \| \iota : A^* \rightarrow C(K)\| $ holds because $C(K)$ is a commutative C$^*$-algebra, we obtain the bound:
	$$
	\| T: A^* \rightarrow R_H \cap C_H \|_{cb} \le \| \iota: A^* \rightarrow C(K)\|_{cb} \| \tilde T: C(K) \rightarrow R_H \cap C_H \|_{cb} \le 2 \pi_{2,R+C}(T).
	$$
\end{proof}

\begin{proof}[Proof of Proposition \ref{prop. Mab}]
	It is well known (see \cite[Equation (12.2.5)]{EfRu91} and the comments below) that the associated tensor to $M_{a,b}$, $\widehat{M}_{a,b}$, belongs to the unit ball of $S_1(H)\hat{\otimes}S_1(H)=S_1(H\otimes H)$ (see \cite[Proposition 7.2.1]{EffrosRuanBook} for this last identification). By duality, we deduce that for any linear map $T$ such that $\|T:S_1(H)\rightarrow B(H)\|_{cb}\leq 1$, which corresponds to an element $\widehat{T}$ in the unit ball of $(S_1(H)\hat{\otimes}S_1(H))^*$, verifies $|\langle \widehat{M}_{a,b}, \widehat{T}\rangle|\leq 1$.
	
	Now, according to Corollary \ref{cor-thm-Pisier}, proving that $\gamma_{R\cap C}(\widehat{M}_{a,b})\leq 4$ implies that $\Gamma_{R\cap C}(M_{a,b})\leq 4\sqrt{2}$. The moreover part of Corollary \ref{cor-thm-Pisier} indicates that this happens if and only if $|\langle \widehat{M}_{a,b}, V\rangle|\leq 4$, for any $V\in (S_1(H)\otimes S_1(H))^*$ such that, as a linear map $V:S_1(H)\rightarrow B(H)$, it verifies that $V=v_2^*\circ v_1$, where $\pi_{2,w_1}(v_1:S_1(H)\rightarrow \tilde{H})\leq 1 $ and $\pi_{2,w_2}(v_2:S_1(H)\rightarrow \tilde{H}^*)\leq 1$, with  $\tilde{H}$ is an arbitrary complex Hilbert space. Hence, by the comments in the previous paragraph, the result will follow from the fact that such a $V$ verifies that $\|V:S_1(H)\rightarrow B(H)\|_{cb}\leq 4$. We show below that this is in fact true.
	
		On the one hand, since $\pi_{2,w_1}(v_1)=\pi_{2,RC}(v_1)\leq 1$, Corollary \ref{cor 2} implies that $$\|v_1:S_1(H)\rightarrow R_{\tilde{H}}+C_{\tilde{H}}\|_{cb}\leq 2.$$
	
On the other hand, since $\pi_{2,w_2}(v_2)=\pi_{2,R+C}(v_2)\leq 1$, this time is Lemma \ref{cor 3} which allows us to obtain
	\begin{align*}
 \| v_2 : S_1(H) \rightarrow R_{\tilde H} \cap C_{\tilde H}\|_{cb} \leq 2.
	\end{align*}
	Clearly, this can also  be read as $	\|v_2^*:R_{\tilde{H}}+C_{\tilde{H}}\rightarrow B(H)\|_{cb} \le 2$.
	
	Therefore, we conclude that $$\|V:S_1(H)\rightarrow B(H)\|_{cb}\leq \|v_1:S_1(H)\rightarrow R_{\tilde{H}}+C_{\tilde{H}}\|_{cb}\|v_2^*:R_{\tilde{H}}+C_{\tilde{H}}\rightarrow B(H)\|_{cb}\leq 4,$$which finishes the proof.
\end{proof}

With this proposition at hand, we are ready to prove our main result.
\begin{proof}[Proof of Theorem \ref{Main Thm}]
By homogeneity it suffices to show that for every linear map $T:X\rightarrow A^*$ such that $\pi_{1,cb}(T)\leq 1$, we have $\|T\|_{cb}\leq K$. To this end,  assume that $\iota:X\iny B(H)$ is a complete isometry and let us invoke the factorization theorem for $(1,cb)$-summing maps (see Remark \ref{Rem-factorization (p,cb)}) to conclude the existence of an ultrafilter $\mathcal U$ over an index set $I$,  families $(a_i)_i$, $(b_i)_i$ in the unit sphere of $S_2(H)$, a closed (operator) space $E_1\subseteq \prod S_1/\mathcal U$ and a linear map $u:E_1\rightarrow A^*$ with $\|u\|=\pi_{1,cb}(T)\leq 1$ such that the following diagram commutes:
$$\xymatrix@R=0.6cm@C=1.5cm {{\prod B(H)/\mathcal U}\ar[r]^{\mathcal M} &
	{\prod S_1/\mathcal U} \\{j(X)}\ar@{}[u]|-*[@]{\subset}\ar[r]^{\mathcal M|_{j(X)}}
	& {E_1}\ar@{}[u]|-*[@]{\subset}\ar[d]^{u}\\{X}\ar[u]^{j}\ar[r]^{T} & {A^*}}$$
Here, $j:X\iny \prod B(H)/\mathcal U$ is a complete isometry and $\mathcal M:\prod B(H)/\mathcal U\rightarrow \prod S_1/\mathcal U$ is the linear map defined by the family $(M_i)_i$, where $M_i:B(H)\rightarrow S_1(H)$ is given by $M_i(x)=a_ixb_i$ for every $i\in I$. In order to simplify notation let us denote $\hat{B}=\prod B(H)/\mathcal U$, $\hat{S}_1=\prod S_1/\mathcal U$ and $\tilde{\mathcal M}=\mathcal M|_{j(X)}$. We will show now that the previous factorization in fact implies that $T$ factorizes through $R \cap C$ with completely bounded maps. Hence, $T$ must be completely bounded.

In first place, according to Proposition \ref{prop. Mab}, $\Gamma_{R\cap C}(M_{a_i,b_i})\leq K$ for any $i \in I$. Then, one can easily deduce from Remark \ref{remark_ultraproducts} that $\Gamma_{R\cap C}(\mathcal M)\leq K$.  Moreover, this easily implies that $\Gamma_{R\cap C}(\tilde{\mathcal M})\leq K$. Indeed, if $\mathcal M=\beta\circ \alpha$, where $\|\alpha:\hat{B}\rightarrow R_{H}\cap C_{H}\|_{cb}\|\beta:R_{H}\cap C_{H}\rightarrow \hat{S}_1\|_{cb}\leq K$, where $H$ is a complex Hilbert space, we can define $\tilde{H}=\alpha(j(X))\subset H$. In virtue of the homogeneity of $R_H \cap C_H$, $\tilde{ H}$ inherits the same  $R\cap C$ operator space structure (being now the underlying Hilbert space $\tilde{ H}$ instead of $H$). Then, by denoting $\tilde{\alpha}=\alpha|_{j(X)}:j(X)\rightarrow \tilde{H}$ and $\tilde{\beta}:\beta|_{\tilde{H}}:\tilde{H}\rightarrow E_1$, it is clear that $\tilde{\mathcal M}=\tilde{\beta}\circ\tilde {\alpha}$ and $\|\tilde{\alpha}:j(X)\rightarrow R_{\tilde{H}}\cap C_{\tilde{H}}\|_{cb}\|\tilde{\beta}:R_{\tilde{H}}\cap C_{\tilde{H}}\rightarrow E_1\|_{cb}\leq K$.

Therefore, we obtain a decomposition $T=(u\circ \tilde{\beta})\circ(\tilde{\alpha}\circ j)$ such that 
\begin{align*}
\|T:X	\rightarrow A^*\|_{cb}&\leq \|\tilde{\alpha}\circ j:X\rightarrow R_{\tilde{H}}\cap C_{\tilde{H}}\|_{cb}\|u\circ \tilde{\beta}:R_{\tilde{H}}\cap C_{\tilde{H}}\rightarrow A^*\|_{cb}\\&\leq 2 \|\tilde{\alpha}:j(X)\rightarrow R_{\tilde{H}}\cap C_{\tilde{H}}\|_{cb}\|\tilde{\beta}:\tilde{H}\rightarrow E_1\|\|u:E_1\rightarrow A^*\|\\&\leq 2 \|\tilde{\alpha}:j(X)\rightarrow R_{\tilde{H}}\cap C_{\tilde{H}}\|_{cb}\|\tilde{\beta}:R_{\tilde{H}}\cap C_{\tilde{H}}\rightarrow E_1\|_{cb}\|u:E_1\rightarrow A^*\|\\&\leq 2K,
\end{align*}where in the second inequality we have used that $\|j:X\rightarrow j(X)\|_{cb}\leq 1$ and Corollary \ref{key cor} (in its dual form) to write $$\|u\circ \tilde{\beta}:R_{\tilde{H}}\cap C_{\tilde{H}}\rightarrow A^*\|_{cb}\leq 2\|u\circ \tilde{\beta}:\tilde{H}\rightarrow A^*\|\leq \|\tilde{\beta}:\tilde{H}\rightarrow E_1\|_{}\|u:E_1\rightarrow A^*\|,$$ while in the third inequality we have used the trivial inequality $\|\tilde{\beta}:\tilde{H}\rightarrow E_1\|\leq \|\tilde{\beta}:R_{\tilde{H}}\cap C_{\tilde{H}}\rightarrow E_1\|_{cb}$ together with the fact that $u$ is a contraction.

This concludes the proof.
\end{proof}
\begin{remark}
Note that we have actually proved that any linear map $T:X\rightarrow A^*$, where $X$ is an operator space and $A$ is a C$^*$-algebra, verifies $$\|T\|_{cb}\leq \Gamma_{R\cap C}(T)\leq K\pi_{1,cb}(T).$$
\end{remark}

\section{Quantum XOR games via tensor norms}\label{Sec:QXOR}

A \textit{bipartite quantum XOR game} is described by means of a family of bipartite quantum states $(\rho_x)_{x=1}^N$, a family of signs $c=(c_x)_{x=1}^N\in \{-1,1\}^N$ and a probability distribution $p=(p_x)_x$ on $\{1,\cdots, N\}$. Here, a bipartite quantum state is just a semidefinite positive operator acting on the tensor product of two finite dimensional complex Hilbert spaces, $H_A\otimes H_B$, with trace one.

In order to understand the game, we can think of two (spatially separated) players, Alice and Bob,  and a referee. The game starts with the referee choosing one of the states $\rho_x$ according to the probability distribution $p$. Then, the referee sends  register $H_A$ to Alice and register $H_B$ to Bob (this can be understood as some quantum questions). After receiving the states, Alice and Bob must answer an output, $a=\pm 1$ in the case of Alice and $b=\pm 1$ in the case of Bob. Then, the players win the game if $ab=c_x$. These games were first considered in \cite{ReVi15} as a natural generalization of classical XOR games, which have a great relevance in both quantum information and computer science. As we will see below, the relevant information of the game is encoded in the operator
\begin{align}\label{XOR quantum-operator}
G=\sum_{x=1}^Nc_xp_x \rho_x,
\end{align} which is a selfadjoint operator acting on $H_A\otimes H_B$ such that $\|G\|_{S_1(H_A\otimes H_B)}\leq 1$. In fact, one can relate any selfadjoint operator acting on $H_A\otimes H_B$ verifying $\|G\|_{S_1(H_A\otimes H_B)}\leq 1$ with a quantum XOR game, establishing a correspondence between these objects. 

In the following we will denote by $M_k$ (resp. $M_k^{\mathrm{sa}}$) the complex (real) vector space of $k\times k$ (selfadjoint) matrices. This space endowed with the trace and operator norms will be denoted by $S_1^k$ ($\Tk$) and $S_\infty^{k}$ ($S_\infty^{k,\mathrm{sa}}$), respectively. In the rest of this section we will identify $H_A=\C^n$ and $H_B=\C^m$. In this case, according to the previous paragraph, a quantum XOR game $G$ can be identified with an element in $B_{S_1^{nm,\mathrm{sa}}}$,  the unit ball of $S_1^{nm,\mathrm{sa}}$.

When playing a quantum XOR game, Alice and Bob generate their answers by means of some operation (a quantum channel, see e.g. \cite{NielsenChuang}) on the system received from the referee. We call such an operation a \textit{strategy}. Formally, a  strategy for Alice and Bob can be expressed by a linear map $\mathcal P:M_{nm}^{\mathrm{sa}} \rightarrow \R^4$ such that, for any given state $\rho$, it assigns a probability distribution over the set of possible answers: $$\mathcal P(\rho)=\mathcal P(a,b|\rho)_{a,b=\pm 1}.$$Note that, for a fixed strategy, it is very easy to write the probability of winning the game: 
\begin{align*}
\mathbf{P}_{win}(G;\mathcal P)=\sum_{x:c_x=1}p_x\Big(\mathcal P(1,1|\rho_x)+\mathcal P(-1,-1|\rho_x)\Big)+\sum_{x:c_x=-1}p_x\Big(\mathcal P(1,-1|\rho_x)+\mathcal P(-1,1|\rho_x)\Big).
\end{align*}

It is also easy to see that if Alice and Bob answer randomly (somehow the most naive strategy), that is, $\mathcal P(a,b|\rho_x)=\frac{1}{4}$ for every $a,b=\pm 1$ and every $\rho_x$, then $\mathbf{P}_{win}(G;\mathcal P)=\frac{1}{2}$. Hence, when working with XOR games, it is very common to study the so-called  \textit{bias} of the game, $\beta (G; \mathcal P)=2 (\mathbf{P}_{win}(G;\mathcal P)-1/2)$ or, equivalently,
\begin{align*}
\mathbf{P}_{win}(G;\mathcal P)-\mathbf{P}_{loose}(G;\mathcal P)=\sum_{x=1}^Np_xc_x\sum_{a,b=\pm 1} abP(a,b|\rho_x).
\end{align*}

Then, we see that, in order to compute the bias, the only relevant part of the strategies are the correlations. That is, given a strategy $\mathcal P$ and a state $\rho$, if we define $$\gamma_\mathcal P(\rho)=\sum_{a,b=\pm 1} abP(a,b|\rho),$$we have $$\beta_\mathcal P (G; \mathcal P)=\sum_{x=1}^Np_xc_x\gamma_\mathcal P(\rho_x).$$

As the reader may guess, the winning probability of the game (and so its bias) will strongly depend on the form of the strategies under consideration. The strategies considered in a given context will be determined by the \textit{resources} allowed to Alice and Bob to play the game. One extreme case is that where the players are allowed to perform any global quantum measurement. This case can be understood as if both players were located at the same place so that they can act as a single person with access to both registers $H_A$ and $H_B$. In this case, a strategy will be given by a family of semidefinite positive operators $(E_{a,b})_{a,b=-1,1}$ acting on $\C^n\otimes \C^m$ verifying that $\sum_{a,b=-1,1}E_{a,b}=\uno_{M_{nm}}$ and such that $$\mathcal P(a,b|\rho)=tr(E_{a,b} \rho) \hspace{0.2 cm} \text{for every $a,b=\pm 1$.}$$ 

It is very easy to see that the supremum of the bias of the game $G\in S_1^{nm,\mathrm{sa}}$ when the players are restricted to these kinds of strategies is given by 
\begin{align*}
\beta_{owq}(G)=\sup\{tr(X G): \, X\in B_{S_\infty^{nm,\mathrm{sa}}}\}=\|G\|_{S_1^{nm}}, 
\end{align*}where $G$ was defined in Equation (\ref{XOR quantum-operator}). The sub-index $owq$ stands for \textit{one-way quantum communication}. This is justified by the observation that the strategies considered above coincide with those where Alice is allowed to send quantum information to Bob (or the other way around) as part of their strategy, since in this case Alice can send her part of the system $\rho_x$ to Bob so that he has access to the whole state.

In this section we will be interested in the identification between the elements $G\in S_1^{n,\mathrm{sa}}\otimes S_1^{m,\mathrm{sa}}\subset S_1^n\otimes S_1^m$ and the linear maps $\hat{G}:S_\infty^n\rightarrow S_1^m$, where we recall that, given $G$, we define $\hat{G}(x)=(tr\otimes \uno_{H_B})\big(G(x^T\otimes \uno_{H_B})\big)$. Note that we must see $G$ as an element in the complex space $S_1^n\otimes S_1^m$ in order to work with operator spaces. With this identification in mind, it is well known that $$\|G\|_{S_1^{nm}}=\pi_1^o(\hat{G}:S_\infty^n\rightarrow S_1^m).$$ 

Indeed, for every linear map $\hat{G}:S_\infty^n\rightarrow S_1^m$ the completely $1$-summing norm coincides with the completely nuclear norm \cite[Corollary 15.5.4]{EfRu91} and the fact that the operator spaces are finite dimensional guarantees that the nuclear norm of $\hat{G}$ is exactly the same as $\|G\|_{S_1^{nm}}$.

Another extreme set of  strategies (somehow, at the opposite side, because they are the most limited ones) are those where Alice and Bob must answer independently. These strategies are usually called \textit{product} or \textit{unentangled strategies} \cite{ReVi15}. In this case there exist operators $E_a$ acting on $\C^n$ and $F_b$ acting on $\C^m$, for $a,b=\pm 1$ such that they are  positive semidefinite, verify $E_1+E_{-1}=\uno_{M_n}$, $F_1+F_{-1}=\uno_{M_m}$ and $$\mathcal P(a,b|\rho)=tr(E_a\otimes F_b \rho)\text{   for any $a,b=\pm 1$ and $\rho$.}$$ It is easy to see that the supremum of the bias of the game $G$ when the players are restricted to these kinds of strategies is given by 
\begin{align*}
\beta(G)=\sup\{tr(A\otimes B G): \, A\in B_{S_\infty^{n,\mathrm{sa}}}, B\in B_{S_\infty^{m,\mathrm{sa}}}\}.
\end{align*}

In particular, the ``norm expression'' of this quantity has the form $$\beta(G)= \|G\|_{S_1^{n,\mathrm{sa}}\otimes_\epsilon S_1^{m,\mathrm{sa}}}.$$One can also show \cite[Claim 4.7]{ReVi15} that
\begin{align*}
\beta(G)\leq \|G\|_{S_1^n\otimes_\epsilon S_1^m}=\|\hat{G}:S_\infty^n\rightarrow S_1^m\|\leq \sqrt{2}\beta(G).
\end{align*}

There are many more possible strategies one can consider in the study of quantum XOR games. A very important family of strategies are the so-called \textit{entangled strategies}, in which the players are allowed to use a bipartite quantum state. This situation has been deeply studied and it leads to the expression 
\begin{align*}
\beta^*(G)=\sup\{tr\big((A\otimes B)(G\otimes \rho_{A'B'}\big)\},
\end{align*}where in this case the supremum runs over all possible complex Hilbert spaces $H_{A'}$, $H_{B'}$, bipartite quantum states $\rho_{A'B'}$ acting on $H_{A'}\otimes H_{B'}$ and self adjoint contractive operators $A$ and $B$ acting on $\C^n\otimes H_{A'}$ and $\C^m\otimes H_{B'}$, respectively . In this case, the norm to be considered in $S_1^n\otimes S_1^m$ is the minimal norm (in the category of operator spaces) and one can show \cite[Claim 4.14]{ReVi15} that
\begin{align}\label{entangled_bias}
\beta^*(G)= \|G\|_{S_1^n\otimes_{min} S_1^m}= \|\hat{G}:S_\infty^n\rightarrow S_1^m\|_{cb}.
\end{align}

In light of the previous paragraphs, we see that the bias of the game $G$ according to different type of strategies can be expressed by means of different norms of $G$ as a linear map from $S_1^n$ to $S_1^m$. This is the way in which we aim to understand the bias of $G$ when the players are restricted to sending \textit{classical communication} from Alice to Bob. The study of this set of strategies is the main goal of this section.

Denoting by $\beta_{owc}(G)$ the bias of $G$ when the players are restricted to the use of one-way classical communication (from Alice to Bob), we will show:
\begin{prop}\label{Prop XOR-norms}
	Given a quantum XOR game $G\in S_1^{n,\mathrm{sa}}\otimes S_1^{m,\mathrm{sa}}\subset S_1^n\otimes S_1^m$,  we have $$\beta_{owc}(G)\leq \pi_{1,cb}(\hat{G}:S_\infty^n\rightarrow S_1^m)\leq 4 \beta_{owc}(G).$$ 
\end{prop}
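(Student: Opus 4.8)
The plan is to establish the two inequalities in Proposition \ref{Prop XOR-norms} separately, by unpacking the definition of $\beta_{owc}(G)$ in terms of one-way classical communication protocols and matching it against the $(1,cb)$-summing norm of $\hat G$ as a map $S_\infty^n \to S_1^m$. First I would describe precisely what a one-way classical strategy is: Alice receives her register, performs a measurement with outcomes $k$ in some finite set (say with POVM elements $\{E_k\}_k$ on $\C^n$), sends the classical label $k$ to Bob, and Bob then chooses a two-outcome measurement depending on $k$ and his register, giving operators $F_b^k$ on $\C^m$. Alice's own $\pm 1$ answer can likewise depend on $k$, so the correlation kernel has the form $\gamma_{\mathcal P}(\rho) = \sum_k \mathrm{tr}\big((A_k \otimes B_k)\,\rho\big)$ where $A_k = a_k E_k$ with $a_k \in \{\pm 1\}$ (so $\sum_k |A_k| \le \uno$, or more precisely $\sum_k E_k = \uno$) and each $B_k$ is a self-adjoint contraction on $\C^m$. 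Consequently
$$
\beta_{owc}(G) = \sup \Big\{ \sum_k \mathrm{tr}\big((A_k \otimes B_k)\, G\big) \ : \ \|B_k\|_{S_\infty^m} \le 1, \ \sum_k E_k = \uno_{M_n}, \ A_k = a_k E_k \Big\}.
$$

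Next I would rewrite $\sum_k \mathrm{tr}((A_k\otimes B_k)G)$ using the identification $M_{nm} = L(M_n \to M_m)$. Since $\hat G(x) = (\mathrm{tr}\otimes\uno)(G(x^T \otimes \uno))$, one has $\mathrm{tr}((A_k\otimes B_k)G) = \mathrm{tr}_m\big(B_k\, \hat G(A_k^T)\big)$, so
$$
\beta_{owc}(G) = \sup \Big\{ \sum_k \mathrm{tr}_m\big(B_k\, \hat G(A_k^T)\big) \Big\} = \sup \Big\{ \sum_k \big\| \hat G(A_k^T)\big\|_{S_1^m} \Big\},
$$
where in the last step I use that for fixed $A_k$ the optimal contractive self-adjoint $B_k$ achieves the trace-class norm of $\hat G(A_k^T)$ (up to the real/complex subtlety, which costs at most a factor $\sqrt 2$, but here the $A_k$ are self-adjoint so this is clean). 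Now the family $(A_k^T)_k$ satisfies $\sum_k |A_k^T| \le \uno$ — equivalently it is a contraction in the sense that $\sum_k e_k \otimes A_k^T$ has small norm in $\ell_1^{\mathrm{sa}} \otimes_{min} S_\infty^n$ up to a universal factor (this is where the factor $4$ will enter, cf. the relation between $\{\pm1\}$-weighted POVM decompositions and the $\ell_1 \otimes_{min}$ norm, see \cite[Claim 4.7]{ReVi15} and the analogous estimate for the injective tensor norm). Matching this against the definition $\pi_{1,cb}(\hat G) = \|\mathrm{id}\otimes\hat G : \ell_1 \otimes_{min} S_\infty^n \to \ell_1(S_1^m)\|$ gives both inequalities: the lower bound $\beta_{owc}(G) \le \pi_{1,cb}(\hat G)$ because every one-way classical strategy produces an admissible element of the unit ball of $\ell_1 \otimes_{min} S_\infty^n$, and the upper bound $\pi_{1,cb}(\hat G) \le 4\beta_{owc}(G)$ because conversely, given a near-optimal element $\sum_k e_k \otimes x_k$ of the $\ell_1 \otimes_{min} S_\infty^n$ unit ball, one can (after a dilation/rounding argument, at the cost of the constant $4$) manufacture a genuine one-way classical strategy realizing a comparable value; the rounding step is where one replaces an arbitrary contraction $x_k$ by a $\pm 1$-signed POVM, splitting real and imaginary, positive and negative parts.

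The main obstacle I anticipate is the careful bookkeeping of constants in the ``rounding'' direction, i.e. showing $\pi_{1,cb}(\hat G) \le 4\beta_{owc}(G)$. One has to pass from an abstract element $u$ with $\|u\|_{\ell_1 \otimes_{min} S_\infty^n} \le 1$ and the dual description of the $\ell_1(S_1^m)$ norm to an honest family of POVMs; this requires writing the minimal tensor norm on $\ell_1 \otimes_{min} S_\infty^n$ in a form amenable to physical interpretation — concretely, that $u = \sum_k e_k \otimes x_k$ has norm $\le 1$ iff there exist states (density matrices) $\sigma$ such that $\sum_k |x_k| \le $ something controlled, which lets one absorb the $x_k$ into POVM elements $E_k$ and signs $a_k$. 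Splitting a self-adjoint contraction into a difference of two positive contractions, and then extending the resulting sub-POVM to a genuine POVM by adding a slack element, each potentially costs a factor of $2$, which is consistent with the claimed $4$. The other direction and the identification of $\beta_{owc}$ with the supremum of $\sum_k \|\hat G(A_k^T)\|_{S_1^m}$ I expect to be routine given the machinery already set up in this section for $\beta$, $\beta^*$ and $\beta_{owq}$.
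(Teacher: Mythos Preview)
Your plan for the lower bound $\beta_{owc}(G)\le \pi_{1,cb}(\hat G)$ is essentially correct and matches the paper: once you parametrize Alice's action by signed POVM elements $A_k=a_kE_k$ (or, as in the paper, $A_k=E_{1,k}-E_{-1,k}$), the map $e_k\mapsto A_k$ is a difference of two completely positive maps summing to a unital one, hence a complete contraction from $\ell_\infty^d$ to $S_\infty^n$; this is exactly the statement $\|\sum_k e_k\otimes A_k\|_{\ell_1^d\otimes_{min}S_\infty^n}\le 1$.

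The genuine gap is in the upper bound $\pi_{1,cb}(\hat G)\le 4\beta_{owc}(G)$. You propose to take an arbitrary element $\sum_k e_k\otimes x_k$ in the unit ball of $\ell_1^d\otimes_{min}S_\infty^n$ and ``round'' it by splitting each $x_k$ into real/imaginary and then positive/negative parts, hoping that this yields a sub-POVM. It does not: the constraint $\|\sum_k e_k\otimes x_k\|_{\ell_1^d\otimes_{min}S_\infty^n}\le 1$ is a \emph{global} cb-norm condition on the map $\hat x:\ell_\infty^d\to S_\infty^n$, $e_k\mapsto x_k$, and there is no characterization of the form ``$\sum_k |x_k|\le$ something'' as you suggest. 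Splitting each $x_k$ individually gives you no control whatsoever on $\sum_k(x_k^++x_k^-)$, so you cannot manufacture POVM elements this way.

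The missing ingredient is Wittstock's decomposition theorem (Theorem~\ref{Wittstock} in the paper): since $\hat x:\ell_\infty^d\to S_\infty^n$ is completely contractive, it decomposes as $(u_1-u_2)+i(u_3-u_4)$ with each $u_j$ completely positive and $(u_1+u_2)(\uno)\le\uno$, $(u_3+u_4)(\uno)\le\uno$. Evaluating at the $e_k$'s gives $E_{\pm 1,k}:=u_{1,2}(e_k)$ and $\tilde E_{\pm 1,k}:=u_{3,4}(e_k)$, which are positive operators with $\sum_{a,k}E_{a,k}\le\uno$ and $\sum_{a,k}\tilde E_{a,k}\le\uno$; adding a slack element turns each into an honest POVM. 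Thus $A_k=C_k+i\tilde C_k$ with $C_k=E_{1,k}-E_{-1,k}$ and $\tilde C_k=\tilde E_{1,k}-\tilde E_{-1,k}$ arising from two legitimate one-way strategies. The factor $4$ then comes from one factor $2$ for the real/imaginary split of the $B_k$'s and one factor $2$ for the split $A_k=C_k+i\tilde C_k$, not from the two elementary doublings you describe.
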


In order to prove the previous proposition we must study the correlations obtained from the strategies we are considering. Let's assume that Alice can send $c$ bits of classical information (so, $2^c$ classical messages) to Bob as a part of their strategy. Hence, after receiving her part of the system from the referee, Alice will have to produce two different data: the message to be sent to Bob and the output $a$ to be sent to the referee. This can be modelled by a family of semidefinite positive operators $E_{a,k}$ acting on $\C^n$, where $a=\pm 1$, $k=1,\cdots, 2^c$,  and such that $\sum_{a,k}E_{a,k}=\uno_{M_n}$. Indeed, given a state $\rho$ acting on $\C^n$, the probability that Alice outputs the pair $(a,k)$ upon the reception of $\rho$ is given by $p(a,k)=tr(E_{a,k}\rho)$. On the other hand, after this first stage Bob will have access to his part of the state $\rho_x$ as well as the message received from Alice, and he will have to output $b=\pm 1$. Hence, Bob's action is modelled by a family of semidefinite positive operators $F_{b,k}$ acting on $\C^m$, where $a=\pm 1$, $k=1,\cdots, 2^c$, and  such that $\sum_{b}F_{b,k}=\uno_{M_m}$ for every $k$ (that is, Bob can perform a measurement according to the message received from Alice). In this way, the strategy will be given by $$\mathcal P(a,b|\rho)=\sum_{k=1}^{2^c}tr(E_{a,k}\otimes F_{b,k} \rho)\text{   for any $a,b=\pm 1$ and $\rho$}.$$ It is now very easy to see that the supremum of the bias of $G$ over all possible strategies of this form is given by 
\begin{align}\label{owc bias}
\beta_{owc} (G)=\sup\left\{\sum_{k=1}^{2^c}tr\left((A_k\otimes B_k) G\right): A_k=E_{1,k}-E_{-1,k},\, B_k=F_{1,k}-F_{-1,k}\right\},
\end{align}where here the supremum is taken over families of operators $\{E_{a,k}\}_{a,k}$ and $\{F_{b,k}\}_{b,k}$ as above.

In Proposition \ref{Prop XOR-norms} we will relate the bias $\beta_{owc} (G)$ with the $\pi_{1,cb}$-norm (defined in Equation \eqref{Equation (p,cb)-summing}) of the corresponding map $\hat G$. It easy to see that this norm can be equivalently written as
\begin{align}\label{Eq_sup_d}
\pi_{1,cb}(\hat{G}:S_\infty^n\rightarrow S_1^m)=\sup_d \left\|\uno\otimes \hat{G}:\ell_1^d\otimes_{min} S_\infty^n\rightarrow \ell_1^d(S_1^m) \right\|.
\end{align}

Let us write this norm in more detail. For each natural number $d$ in the above supremum we have:
\begin{align}
\left \|\uno\otimes \hat{G}:\ell_1^d\otimes_{min} S_\infty^n\rightarrow \ell_1^d(S_1^m) \right\| 
&=  \sup \left\{	\Big\|(\uno\otimes \hat{G})\Big(\sum_{k=1}^de_k\otimes A_k\Big)\Big \|_{\ell_1^d(S_1^m)} \, : \, \sum_{k=1}^de_k\otimes A_k\in B_{ \ell_1^d \otimes_{min} S_\infty^n } 	\right\} 
\nonumber
\\
&=  \sup \left\{ \sum_{k=1}^d  tr\big(G(A_k^T\otimes B_k)\big) 	\, : \, \begin{array}{cc} \sum_{k=1}^de_k\otimes A_k\in B_{ \ell_1^d \otimes_{min} S_\infty^n }, \\[0.3em] \hspace{-0.4 cm} \sum_{i=1}^de_k\otimes B_k\in B_{ \ell_\infty^d (S_\infty^m}) \end{array} 	\right\}.
\label{pi_1,cb:def1}
\end{align}

Note that the transpose in $A_k$ doesn't play any role in Equation (\ref{pi_1,cb:def1}) because the supremum is taken over all $x=\sum_{k=1}^de_k\otimes A_k$ with $\|x\|_{ \ell_1^d\otimes_{min} S_\infty^m} \le 1$. Replacing $A_k$ by $A_k^T$ doesn't change the norm of $x$, so we can ignore it.

As a final preamble before proving Proposition \ref{Prop XOR-norms}, we need to recall the following decomposition theorem due to Wittstock \cite{Wittstock81}. We use the statement appearing in \cite[Corollary 1.9]{Pisierbook}.
\begin{theorem}\label{Wittstock}
	Let $A$ be a C$^*$-algebra and $H$ be a Hilbert space. Then, for any completely bounded map $u:A\rightarrow B(H)$ there exist completely positive maps $u_k:A\rightarrow B(H)$ for $k=1,\ldots, 4$, such that $u=(u_1-u_2)+i(u_3-u_4)$ and \footnote{For this part of the statement, see the last line in the proof of \cite[Corollary 1.9]{Pisierbook}.} $$\max\{\|u_1+u_2\|_{cb}, \|u_3+u_4\|_{cb}\}\leq \|u\|_{cb}.$$ In particular, if $\|u\|_{cb}\leq 1$, $(u_1+u_2)(\uno_A)\leq \uno_{B(H)}$ and $(u_3+u_4)(\uno_A)\leq \uno_{B(H)}$.
\end{theorem}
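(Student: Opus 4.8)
The statement to prove is Wittstock's decomposition theorem, and the plan is to derive it from the fundamental factorization (Stinespring-type representation) theorem for completely bounded maps. By positive homogeneity of every quantity involved, I would first reduce to the normalized case $\|u\|_{cb}\le 1$; the general case follows by scaling the four output maps by $\|u\|_{cb}$.

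The essential input I would invoke is the representation theorem for completely bounded maps into $B(H)$ (see \cite{Pisierbook}): there exist a Hilbert space $K$, a $*$-representation $\pi:A\rightarrow B(K)$ and bounded operators $V,W:H\rightarrow K$ with $\|V\|\,\|W\|=\|u\|_{cb}$ such that $u(x)=V^*\pi(x)W$ for all $x\in A$. After rescaling $V\mapsto tV$, $W\mapsto t^{-1}W$ I may arrange $\|V\|=\|W\|=\|u\|_{cb}^{1/2}\le 1$, so that $V$ and $W$ are contractions and $\pi(\uno_A)$ is a (projection) contraction.

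Next I would define four maps by a polarization of this factorization:
\begin{align*}
u_1(x)&=\tfrac14(V+W)^*\pi(x)(V+W), & u_2(x)&=\tfrac14(V-W)^*\pi(x)(V-W),\\
u_3(x)&=\tfrac14(V-iW)^*\pi(x)(V-iW), & u_4(x)&=\tfrac14(V+iW)^*\pi(x)(V+iW).
\end{align*}
Each $u_k$ has the Stinespring form $S^*\pi(\cdot)S$, hence is completely positive. Writing $u^\dagger(x):=u(x^*)^*=W^*\pi(x)V$, a direct expansion of the quadratic forms (the cross terms survive in the differences and cancel in the sums) gives $u_1-u_2=\tfrac12(u+u^\dagger)$ and $u_3-u_4=\tfrac1{2i}(u-u^\dagger)$, so that $(u_1-u_2)+i(u_3-u_4)=\tfrac12(u+u^\dagger)+\tfrac12(u-u^\dagger)=u$, which is the claimed decomposition.

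For the norm estimate I would use that the sums collapse to a single completely positive map, namely $u_1+u_2=u_3+u_4=\tfrac12\big(V^*\pi(\cdot)V+W^*\pi(\cdot)W\big)$. Since for a completely positive $\Phi$ on a unital C$^*$-algebra one has $\|\Phi\|_{cb}=\|\Phi(\uno_A)\|$, and here $(u_1+u_2)(\uno_A)=\tfrac12\big(V^*\pi(\uno_A)V+W^*\pi(\uno_A)W\big)\le\tfrac12(V^*V+W^*W)\le\uno_{B(H)}$ because $V,W$ are contractions, I obtain $\max\{\|u_1+u_2\|_{cb},\|u_3+u_4\|_{cb}\}\le 1=\|u\|_{cb}$, together with the ``in particular'' inequalities $(u_1+u_2)(\uno_A)\le\uno$ and $(u_3+u_4)(\uno_A)\le\uno$. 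The main obstacle is entirely concentrated in the representation step: the factorization $u=V^*\pi(\cdot)W$ is the deep ingredient, itself proved by the off-diagonalization trick that promotes $u$ to a completely positive map on an operator system inside $M_2(A)$, followed by Arveson's extension theorem and the Stinespring dilation. Everything after that is the elementary polarization bookkeeping above.
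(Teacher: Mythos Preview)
Your proof is correct and is exactly the standard argument (the one in \cite[Corollary~1.9]{Pisierbook}, which the paper cites). Note, however, that the paper does \emph{not} supply its own proof of this statement: Theorem~\ref{Wittstock} is quoted as a known result from Pisier's book and used as a black box in the proof of Proposition~\ref{Prop XOR-norms}. So there is no independent argument in the paper to compare against; your polarization-of-the-Stinespring-factorization proof is precisely what one finds at the cited reference, down to the observation that $u_1+u_2=u_3+u_4=\tfrac12(V^*\pi(\cdot)V+W^*\pi(\cdot)W)$ and the use of $\|\Phi\|_{cb}=\|\Phi(\uno_A)\|$ for completely positive $\Phi$.

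One cosmetic remark: the initial normalization to $\|u\|_{cb}\le 1$ is unnecessary, since with $\|V\|=\|W\|=\|u\|_{cb}^{1/2}$ you directly get $(u_1+u_2)(\uno_A)\le\tfrac12(V^*V+W^*W)\le\|u\|_{cb}\,\uno_{B(H)}$ and hence $\|u_1+u_2\|_{cb}\le\|u\|_{cb}$ without any rescaling step.
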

\begin{proof}[Proof of Proposition \ref{Prop XOR-norms}]
	
	Let us first show that $\beta_{owc}(G)\leq \pi_{1,cb}(\hat{G}:S_\infty^n\rightarrow S_1^m)$. To do so, let us consider 
	operators $A_k=E_{1,k}-E_{-1,k}$ and $B_k=F_{1,k}-F_{-1,k}$ for every $k$ such that the $E$'s and the $F$'s are  positive semidefinite, $\sum_{a,k}E_{a,k}=\uno_{M_n}$ and $\sum_{b}F_{b,k}=\uno_{M_m}$ for every $k$. Let us show that
	\begin{align}\label{ineq_positive and cb}
	\Big\|\sum_k e_k\otimes A_k\Big\|_{\ell_1^{2^c}\otimes_{min} S_\infty^n} \leq 1\hspace{0.3 cm}\text{  and } \hspace{0.3 cm}\|B_k\|_{S_\infty^m}\leq 1\text{   for every $k$.}
	\end{align} The second bound in Equation (\ref{ineq_positive and cb}) is very easy from the definition of $B_k$ and the fact that $F_{1,k}$ and $F_{-1,k}$ are positive semidefinite verifying $F_{1,k}+F_{-1,k}=\uno_{M_m}$ for every $k$. In order to see the first bound in (\ref{ineq_positive and cb}), note that $$\Big\|\sum_k e_k\otimes A_k\Big\|_{\ell_1^{2^c}\otimes_{min} S_\infty^n}=\|\hat A : \ell_\infty^{2^c} \rightarrow S_\infty^n \|_{cb},$$where $\hat A$ is the linear map defined by $\hat A(e_k)=A_k$ for every $k$. Now, if we consider the linear maps $\hat u^{\pm} :  \ell_\infty^{2^c} \rightarrow S_\infty^n$ defined by $\hat{u}^{\pm}(e_k)=E_{\pm 1, k}$, respectively, they verify that $\hat A=\hat u^{+}-\hat u^{-}$, both maps $\hat u^{+}$  and $\hat u^{-}$ are completely positive\footnote{Since $A=\ell_\infty^{2^c}$ is a commutative C$^*$-algebra, positive maps are automatically completely positive maps.} and $\hat u^{+}+\hat u^{-}$ is a unital map. Then, using Stinespring's dilation theorem \cite[Theorem 4.1]{PaulsenBook} on the maps $\hat u^{+}$  and $\hat u^{-}$ one can check that $\hat A$ is indeed completely contractive. This proves the desired implication.

	Let us now show that $\pi_{1,cb}(\hat{G}:S_\infty^n\rightarrow S_1^m)\leq 4\beta_{owc}(G)$. According to the equations (\ref{Eq_sup_d}) and (\ref{pi_1,cb:def1}),  given $\epsilon>0$ there exist $d\in \mathbb N$, $x=\sum_{k=1}^de_i\otimes A_k$ with $\|x\|_{ \ell_1^d\otimes_{min} S_\infty^n}\leq 1$ and $\|B_k\|_{S_\infty^m}\leq 1$ for every $k$ such that  $$\pi_{1,cb}(\hat{G})\leq \sum_{k=1}^dtr(G(A_k\otimes B_k))+\epsilon.$$
	
	Next, we construct a strategy from these elements in order to bound $\beta_{owc}(G)$. On the one hand, we write $B_k=B_k^1+iB_k^2$, with $B_k^j\in S_\infty^{m,\mathrm{sa}}$, and $\|B_k^j\|\leq 1$ for $j=1,2$. On the other hand, if we realize $x$ as a completely contractive map $\hat{x}:\ell_\infty^d\rightarrow S_\infty^n$, we can apply Theorem \ref{Wittstock} to obtain completely positive maps $u_i:\ell_\infty^d\rightarrow S_\infty^n$ such that $u=(u_1-u_2)+i(u_3-u_4)$ and $$\max\{\|u_1+u_2\|_{cb}, \|u_3+u_4\|_{cb}\}\leq \|u\|_{cb}\leq 1.$$ Moreover, $(u_1+u_2)(\uno_{\ell_\infty^d})\leq \uno_{M_n}$ and $(u_3+u_4)(\uno_{\ell_\infty^d})\leq \uno_{M_n}$. Let us define $E_{1,k}=u_1(e_k)$, $E_{-1,k}=u_2(e_k)$, $\tilde{E}_{1,k}=u_3(e_k)$ and  $\tilde{E}_{-1,k}=u_4(e_k)$ for every $k=1,\cdots, d$. Note that, $\sum_{a,k}E_{a,k}\leq \uno_{M_n}$ and $\sum_{a,k}\tilde{E}_{a,k}\leq \uno_{M_n}$.  In order to sum up to one, we artificially define $E_{1,0}=\uno_{M_n}-\sum_{a,k}E_{a,k}$, $E_{-1,0}=0$, $\tilde{E}_{1,0}=\uno_{M_n}-\sum_{a,k}\tilde{E}_{a,k}$, $\tilde{E}_{-1,0}=0$. Then, if we set $C_k=E_{1,k}-E_{-1,k}$ and $\tilde{C}_k=\tilde{E}_{1,k}-\tilde{E}_{-1,k}$ for $k=0,1,\cdots, d$, we obtain a couple of families $\{C_k\}_k$ and $\{\tilde{C}_k\}_k$ as in the Equation (\ref{owc bias}). Notice that, by construction, $A_k=C_k+i\tilde{C}_k$ for $k=1,\cdots, d$.
	
	Hence, we can write
	\begin{align*}
	&\Big|\sum_{k=1}^dtr(G(A_k\otimes B_k))\Big|\leq 2\sup\left\{\Big|\sum_{k=1}^dtr(G(A_k\otimes B_k))\Big|: B_k\in B_{S_\infty^{m,\mathrm{sa}}} \right\}
	\\&\leq 2\sup\left\{\Big|\sum_{k=0}^{d}tr(G(C_k\otimes B_k))\Big|: B_k\in B_{S_\infty^{m}}\right\}+2\sup\left\{\Big|\sum_{k=0}^{d}tr(G(\tilde{C}_k\otimes B_k))\Big|: B_k\in B_{S_\infty^{m}}\right\}\\&\leq 4\beta_{owc}(G).
	\end{align*}
	
	With this, we have proved that $\pi_{1,cb}(\hat{G})\leq 4\beta_{owc}(G)+\epsilon$ for every $\epsilon>0$, from where we immediately conclude that $\pi_{1,cb}(\hat{G})\leq 4\beta_{owc}(G)$, as we wanted.
\end{proof}

Proposition \ref{Prop XOR-norms} complements the clean connection between the different values of quantum XOR games and certain norms on the corresponding linear maps associated to these games. This connection, implicitly initiated in \cite{ReVi15} (see also \cite{JuPaVi18}, where the  $1$-summing norm was used to study classical XOR games with communication), allows us to reformulate the chain of inequalities 
$$\beta(G)\leq \left\{
\begin{array}{c}
\beta^*(G)\\[0.2em]
\beta_{owc}(G)
\end{array}
\right\rbrace  \leq \beta_{owq}(G),
$$which is trivial from a physical point of view, as
$$\|\hat{G}:S_\infty^{n}\rightarrow S_1^m\|\leq \left\{
\begin{array}{c}
\|\hat{G}:S_\infty^{n}\rightarrow S_1^m\|_{cb}\\[0.2em]
\pi_{1,cb}(\hat{G}:S_\infty^{n}\rightarrow S_1^m) 
\end{array}
\right\rbrace  \leq \pi_1^o(\hat{G}:S_\infty^{n}\rightarrow S_1^m).
$$

This establishes a clear hierarchy on the relative power of different resources when playing quantum XOR games. However, this hierarchy does not say anything about the comparison between players sharing entanglement (but no communication) and players with one-way classical communication (but no entanglement). That is, the comparison between the norms  $\|\cdot\|_{cb}$ and $\pi_{1,cb}(\cdot)$.

As a first approach to understand the previous relation, we can restrict to operators acting on the diagonals of $S_\infty^{n}$ and $S_1^m$; that is, $\hat{G}:\ell_\infty^n\rightarrow \ell_1^m$ (or equivalently $G\in \ell_1^n\otimes \ell_1^m$). We have\footnote{It is well-known that for these kinds of maps $\pi_{1}(\hat{G}) =\pi_{1,cb}(\hat{G}) = \pi_{1}^o(\hat{G})$. That is, the three notions of 1-summing maps coincide.}
\begin{equation}\label{oneway_in_classicalXOR}
\pi_{1,cb}(\hat{G}:\ell_\infty^n\rightarrow \ell_1^m)=\pi_1^o(\hat{G}:\ell_\infty^n\rightarrow \ell_1^m).
\end{equation}
 Read in the context of quantum XOR games, the previous equation says that one-way classical communication allows the player to perform global measurements. 
This observation easily implies that for these kinds of maps
\begin{align}\label{classical games order}
\|\hat{G}:\ell_\infty^n\rightarrow \ell_1^m\|_{cb}\leq \pi_{1,cb}(\hat{G}:\ell_\infty^n\rightarrow \ell_1^m).
\end{align} Moreover, there exist maps for which  
\begin{align}\label{classical games gap}
\frac{\pi_{1,cb}(\hat{G}:\ell_\infty^n\rightarrow \ell_1^m)}{\|\hat{G}:\ell_\infty^n\rightarrow \ell_1^m\|_{cb}}\geq C\sqrt{\min\{n, m\}}
\end{align}for a universal constant $C$.

This last inequality is not surprising once we know that the classical Grothendieck's Theorem implies 
\begin{align}\label{Grothendieck thm cb-norm}
\|\hat{G}:\ell_\infty^n\rightarrow \ell_1^m\|_{cb}\leq K_G\|\hat{G}:\ell_\infty^n\rightarrow \ell_1^m\|.
\end{align}Hence, Equation (\ref{classical games gap}) follows from the well known estimate $\|id:\ell_1^n \otimes_\epsilon\ell_1^m\rightarrow \ell_1^{nm}\|\geq C\sqrt{\min\{n, m\}}$.

In fact, restricting to real tensors $G\in \ell_1^n\otimes \ell_1^m$ (that is, self adjoint operators)  corresponds to considering classical XOR games \cite{PaVi16}. In this sense, the previous comments are not new at all. Equation (\ref{classical games order}) means that for classical XOR games strategies using classical communication are always better than entangled strategies and, in some cases,  can actually be much better, cf. Equation \eqref{classical games gap}. Moreover, Equation (\ref{Grothendieck thm cb-norm}) tells us that for classical XOR games entangled strategies are very limited (in fact comparable to product strategies), something we already mentioned in the introduction.

One could wonder if something similar happens for general quantum XOR games or, on the contrary, in the setting of quantum XOR games one can find examples for which quantum entanglement is much more useful than classical information. Note that Equation (\ref{classical games gap}) immediately implies the existence of maps $\hat{G}:S_\infty^n\rightarrow S_1^m$ for which $\pi_{1,cb}(\hat{G})/\|\hat{G}\|_{cb}\geq C\sqrt{\min\{n, m\}}.$ However, Equation \eqref{oneway_in_classicalXOR} does not extend from $\ell_1$ to $S_1$ and, therefore, Equation \eqref{classical games order} might not hold in this more general case. In fact, such an extension of Equation \eqref{oneway_in_classicalXOR} is manifestly false. A very simple counterexample is provided by the transpose map  $ \tau: S_\infty^n \rightarrow S_1^n$, for which $\pi_{1}^o(\tau)/\pi_{1,cb}(\tau)= n.$ Indeed, it is very easy to see that $\pi_{1}^o(\tau)=n^2$ while $\pi_{1,cb}(\tau) = \| \tau \|=n$. Furthermore, the equality $\pi_{1,cb}(\tau) = \| \tau \|$ can be reinterpreted in terms of quantum XOR games as an example for which classical one-way communication does not provide any advantage at all over product strategies. Together with the result that there exist quantum XOR games for which entangled strategies attain a bias unboundedly larger than the one achieved by product strategies \cite[Theorem 1.2]{ReVi15}, this points out to the possibility that games $G$ for which $\|\hat{G}\|_{cb}/ \pi_{1,cb}(\hat{G})$ is arbitrarily large might exist. Contrary to this intuition, Theorem \ref{Main Thm} applied to $X=S_\infty^n$ and $A=S_\infty^m$ implies that this is not the case.
\begin{corollary}\label{corFinal}
	There exists a universal constant $C$ such that for every quantum XOR game $G$ $$\beta^*(G)\leq C\beta_{owc}(G).$$
\end{corollary}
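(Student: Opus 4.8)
The plan is to obtain Corollary \ref{corFinal} as an immediate consequence of Theorem \ref{Main Thm} together with the dictionary between biases of quantum XOR games and operator space norms developed in this section. The starting point is Equation \eqref{entangled_bias}, which identifies the entangled bias with a completely bounded norm, $\beta^*(G) = \|\hat{G}:S_\infty^n\rightarrow S_1^m\|_{cb}$, where $\hat{G}$ is the linear map associated to $G$.

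The key observation is that $\hat{G}$ is a map of exactly the type covered by Theorem \ref{Main Thm}. Indeed, $A := S_\infty^m = M_m$ is a finite-dimensional C$^*$-algebra and, via the trace pairing, $S_1^m$ is canonically identified with its operator space dual $A^* = (M_m)^*$; the operator space structure on $S_1^m$ used in Equation \eqref{entangled_bias} (following \cite{ReVi15}) is precisely this predual structure. Hence, taking $X = S_\infty^n$, Theorem \ref{Main Thm} yields
$$\beta^*(G) = \|\hat{G}:S_\infty^n\rightarrow S_1^m\|_{cb} \leq K\,\pi_{1,cb}(\hat{G}:S_\infty^n\rightarrow S_1^m)$$
for the universal constant $K$ of that theorem. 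Combining this with the right-hand inequality of Proposition \ref{Prop XOR-norms}, namely $\pi_{1,cb}(\hat{G}:S_\infty^n\rightarrow S_1^m) \leq 4\beta_{owc}(G)$, gives $\beta^*(G) \leq 4K\,\beta_{owc}(G)$, so the statement holds with $C = 4K$ (explicitly $C = 32\sqrt{2}$, using the bound $K = 8\sqrt{2}$ recorded after Theorem \ref{Main Thm}).

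At this stage there is no real obstacle left: all the substantial work has already been carried out, the hard analytic input being Proposition \ref{prop. Mab} and the factorization of $(1,cb)$-summing maps through $R\cap C$ on the way to Theorem \ref{Main Thm}, and Wittstock's decomposition together with Stinespring's dilation on the way to Proposition \ref{Prop XOR-norms}. The only point requiring a moment's care is that the operator space structures line up, i.e. that the $S_1^m$ appearing in \eqref{entangled_bias} and the $S_1^m$ appearing in Proposition \ref{Prop XOR-norms} both denote the canonical predual operator space structure on $M_m$; this is standard and has in effect already been used throughout the section, so no further argument is needed.
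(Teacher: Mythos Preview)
Your proof is correct and follows essentially the same approach as the paper: both combine Equation \eqref{entangled_bias} with Theorem \ref{Main Thm} (applied to $X=S_\infty^n$, $A=S_\infty^m$) and the right-hand inequality of Proposition \ref{Prop XOR-norms} to obtain $\beta^*(G)\leq 4K\,\beta_{owc}(G)$. The only difference is cosmetic---you spell out why $S_1^m=(M_m)^*$ puts $\hat G$ in the scope of Theorem \ref{Main Thm} and record the explicit constant $C=32\sqrt{2}$, whereas the paper presents the same chain of inequalities more tersely as a ratio.
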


\begin{proof}
	According to Equation (\ref{entangled_bias}) and Proposition  \ref{Prop XOR-norms}, $$\frac{\beta^*(G)}{\beta_{owc}(G)}\leq 4 \frac{\|\hat{G}:S_\infty^n\rightarrow S_1^m\|_{cb}}{\pi_{1,cb}(\hat{G}:S_\infty^n\rightarrow S_1^m)}\leq 4K,$$where $K$ is the constant appearing in Theorem \ref{Main Thm}. 
\end{proof}

Let us mention here that we do not known if $C$ can be taken equal to one in Corollary \ref{corFinal}. Hence, it could still
happen that quantum entanglement is strictly better than sending classical information in some instances.

To finish we make a comment about strategies that mix entanglement and one-way classical communication. From the quantum information point of view, it is well known that the access to both entanglement and one-way classical communication allows Alice to send one-way quantum communication to Bob (thanks to the quantum teleportation protocol). So we recover the value $\beta_{owq}(G)$. From the mathematical point of view, this argument can be understood by showing that the corresponding bias of the game coincides, up to a constant, with the norm $$\|\hat{G}:\ell_1\otimes_{min}S_\infty^{n}\rightarrow \ell_1(S_1^m)\|_{cb.}$$As we explained in the comments right below Theorem \ref{Main Thm}, this norm equals $\pi_1^o(\hat{G})=\| \hat G \|_{S_1^{nm}}$.


\begin{thebibliography}{99}
		
	\bibitem{Cleve04} R. Cleve, P. Hoyer, B. Toner and J. Watrous. \textit{Consequences and limits of nonlocal strategies}. Proceedings. 19th IEEE Annual Conference on Computational Complexity, 2004., pp. 236-249 (2004).
		
	\bibitem{CoNe10}  B. Collins, I. Nechita. \textit{Random quantum channels I: graphical calculus and the Bell state phenomenon}. Comm. Math. Phys. 297, no. 2  345-370 (2010).
	
	\bibitem{CoNe11} B. Collins, I. Nechita.  \textit{Random quantum channels II: entanglement of random subspaces, Renyi entropy estimates and additivity problems}.  Adv. Math. 226, no. 2, 1181-201 (2011).
	
	
	\bibitem{DiJaTo95} J. Diestel, H. Jarchow, A. Tonge. \textit{ Absolutely Summing Operators}, Cambridge Stud. Adv. Math., vol. 43, Cambridge Univ. Press, Cambridge, 1995.
	
	
	\bibitem{EfRu91} E.G. Effros, Z.J. Ruan, \textit{A new approach to operator spaces}. Can. Math. Bull. 34, 329-337 (1991).
	
	\bibitem{EffrosRuanBook} E. G. Effros, Z.-J. Ruan. \textit{Operator spaces}, volume~23, London Mathematical Society
	Monographs. New Series. The Clarendon Press, Oxford University Press, New York, 2000.
	
	\bibitem{Haagerup85} U. Haagerup,  \textit{The Grothendieck inequality for bilinear forms on C$^*$-algebras}, Adv. Math.56, no. 2, 93-116 (1985).
	
	\bibitem{HaMu08} U. Haagerup, M. Musat. \textit{The  Effros-Ruan  conjecture  for  bilinear  forms  on C$^*$-algebras}. Invent. Math.174, 139-163 (2008).
	
	
	\bibitem{HGJ19} S. Harris, L. Gao and M. Junge, \textit{Quantum teleportation and super-dense coding in operator algebras}. Int. Math. Res. Not. (2019).
	
	\bibitem{Hein80} S. Heinrich, \textit{Ultraproducts in Banach space theory}. J. fur die reine und Angew. Math. 313 72-104 (1980). 
	
	\bibitem{HMPS19} W. Helton, K. P. Meyer, V. I. Paulsen, M. Satriano, \textit{Algebras, Synchronous Games and Chromatic Numbers of Graphs}, New York J. Math. 25, 328-361 (2019).
	
	
	\bibitem{JNVWY} Z. Ji, A. Natarajan, T. Vidick, J. Wright, H. Yuen. \textit{MIP$^*$=RE}. Available in arXiv:2001.04383.
	
	
	\bibitem{JungeHab}M. Junge, Factorization Theory for Operator Spaces, Habilitationsschrift, Kiel (1996). 
	
	
	\bibitem{Junge05} M. Junge, \textit{Embedding  of  the  operator  space $OH$ and  the  logarithmic  ``little  Grothendieck  in-equality''}. Invent. Math.161(2005), 225-286 (2005).
	
	
	\bibitem{JuPa16} M. Junge, C. Palazuelos, \textit{CB-norm estimates for maps between noncommutative $L_p$-spaces and quantum channel theory}.  Int. Math. Res. Not.  (3) 875-925 (2016).
	
	
	\bibitem{JuPaVi18} M. Junge, C. Palazuelos, I. Villanueva. \textit{Classical versus quantum communication in XOR games}.  Quantum Inf. Process. 17, 117 (2018).
	
	
	\bibitem{ParJu16} M. Junge, J. Parcet,  \textit{Maurey's factorization theory for operator spaces}, Math. Ann. 347, 299-338 (2010).
	
	\bibitem{JuPi95} M. Junge, G. Pisier, \textit{Bilinear forms on exact operator spaces and $B(H)\otimes B(H)$}. GAFA, 5 (2), 329--363 (1995).
	
	
	\bibitem{LMPRSSTW11} M. Lupini, L. Mancinska, V. I. Paulsen, D. E. Roberson, G. Scarpa, S. Severini, I. G. Todorov, A. Winter, \textit{Perfect strategies for non-signalling games}, Math. Phys. Anal. Geom., vol. 23, 7 (2020).
	
	\bibitem{NielsenChuang} M. Nielsen, I. Chuang. \textit{Quantum Computation and Quantum Information}. Cambridge: Cambridge University Press (2010).
	
	
	\bibitem{PaVi16} C. Palazuelos and T. Vidick. \textit{Survey on nonlocal games and operator space theory}. J. Math. Phys., 57(1): 015220 (2016).
	
	
	\bibitem{PaulsenBook} V. I. Paulsen.  \textit{Completely bounded maps and operator algebras}, volume 78 of Cambridge Studies in Advanced Mathematics. Cambridge University Press, Cambridge, 2002.
	
	
	\bibitem{PSSTW16} V. I. Paulsen, S. Severini, D. Stahlke, I. G. Todorov and A. Winter, \textit{Estimating quantum chromatic numbers}, J. Funct. Anal. 270, no. 6, 2188-2222 (2016).
	
	
	\bibitem{Pisier78} G.  Pisier,   \textit{Grothendieck's  theorem  for  noncommutative C$^*$-algebras,  With  an  appendix  on Grothendieck's constants}, J. Funct. Anal. 29, no. 3, 397-415 (1978).
	
	\bibitem{PisierOH}  G. Pisier,  \textit{The operator Hilbert space OH, complex interpolation, and tensor norms}. Providence R.I.: American mathematical society (1996).
	
	
	\bibitem{Pisier98} G. Pisier, \textit{Non-Commutative Vector Valued $L_p$-Spaces and Completely $p$-Summing Maps}, Asterisque, 247 (1998).
	
	
	\bibitem{Pisier_Factorization}  G. Pisier,  \textit{Factorization of Linear Operators and Geometry of Banach Spaces}. Providence R.I.: American mathematical society (1986).
	
	
	\bibitem{Pisierbook} G. Pisier, \textit{An Introduction to Operator Spaces}, London Math. Soc. Lecture Notes Series 294, Cambridge University Press, Cambridge (2003).
	
	
	\bibitem{PiSh02}  G.  Pisier, D.  Shlyakhtenko, \textit{Grothendieck's  theorem  for  operator  spaces}.  Invent.  Mat 150, 185-217 (2002).
	
	
	\bibitem{ReVi14} O. Regev, T. Vidick. \textit{Elementary proofs of Grothendieck theorems for completely bounded norms}. J. Oper. Theory 71, no. 2, 491-506 (2014).
	
	\bibitem{ReVi15} O. Regev, T. Vidick, \textit{Quantum XOR games}. ACM Transactions on Computation Theory (TOCT), 7 (4), (2015).
	
	\bibitem{Ruan88} Z.-J. Ruan. \textit{Subspaces of {$C^*$}-algebras}. J. Funct. Anal., 76(1), 217-230 (1988).
	
	\bibitem{Tomczak} N. Tomczak-Jaegermann, \textit{Banach-Mazur Distances and Finite Dimensional Operator Ideals}, Pitman Monographs and Surveys in Pure and Applied Mathematics 38, Longman Scientific and Technical, (1989).
	
	
	\bibitem{Wittstock81} G. Wittstock, \textit{Ein operatorwertiger Hahn-Banach Satz}, J. Funct. Anal. 40, 127-150 (1981).
	
	\bibitem{Xu06} Q. Xu, \textit{Operator-space Grothendieck inequalities for noncommutative $L_p$-spaces}. Duke Math.J.131, 525-574 (2006).
\end{thebibliography}
\end{document}